\date{\today}
\newcommand\al{\alpha}
\newcommand\la{{\lambda}}
\newcommand\pmat{\begin{pmatrix}}
\newcommand\epmat{\end{pmatrix}}
\newcommand\bC{{\mathbb C}}
\newcommand\fra{{\mathfrak a}}
\newcommand\frb{{\mathfrak b}}
\newcommand\frf{{\mathfrak f}}
\newcommand\frg{{\mathfrak g}}
\newcommand\frh{{\mathfrak h}}
\newcommand\frk{{\mathfrak k}}
\newcommand\frp{{\mathfrak p}}
\newcommand\frq{{\mathfrak q}}
\newcommand\frs{{\mathfrak s}}
\newcommand\frt{{\mathfrak t}}
\newcommand\fru{{\mathfrak u}}
\newcommand\bbC{{\mathbb C}}
\newtheorem{theorem}{Theorem}[section]
\newtheorem{example}[theorem]{Example}
\newtheorem{proposition}[theorem]{Proposition}
\newtheorem{remark}[theorem]{Remark}
\newcommand\Ad{{\operatorname{Ad}}}
\newcommand\ad{{\operatorname{ad}}}
\begin{document}
\title[Dirac series for complex $E_8$]{Dirac series for complex $E_8$}
\author{Dan Barbasch}
\author{Kayue Daniel Wong}

\address[Barbasch]{Department of Mathematics, Cornell University, Ithaca, NY 14853,
U.S.A.}
\email{barbasch@math.cornell.edu}

\address[Wong]{School of Science and Engineering, The Chinese University of Hong Kong (Shenzhen), Longgang, Shenzhen, Guangdong 518172, P.R. China}
\email{kayue.wong@gmail.com}

\begin{abstract}
In this paper, we classify all unitary representations with non-zero Dirac cohomology 
 (Dirac series) for complex Lie group of Type $E_8$. This completes the classification of Dirac series
for all complex simple Lie groups.
\end{abstract}

\maketitle
\setcounter{tocdepth}{1}

\section{Introduction}\label{sec:intro}
The notion of Dirac operator plays an important role in representation theory
of real reductive groups. In the 1970s, Parthasarathy \cite{P1, P2} and Schmid used
Dirac operators to give a geometric realization of the discrete series. 
Later, Vogan in \cite{V2} introduced the notion of {\bf Dirac cohomology} for irreducible
representations in order to find sharper estimates for the spectral gap for
locally symmetric spaces. He  formulated a conjecture on its relationship with
the infinitesimal character of the representation, which was subsequently proven by 
Huang and Pand\v zi\'c in \cite{HP1}. 

\medskip

One application of Dirac cohomology is to have a better understanding of the unitary dual $\widehat{G}$.
Indeed, the set of unitary $(\mathfrak{g},K)-$modules with non-zero Dirac cohomology
(the {\bf Dirac series}) $\widehat{G}^d$ contains a large amount of interesting unitary 
representations.
For instance, it is not hard to show that 
$\widehat{G}^d$ strictly contains all unitary modules with non-zero $(\mathfrak{g},K)-$cohomology. By Salamanca-Riba and Vogan-Zuckerman \cite{SR, VZ}, this implies that $\widehat{G}^d$ contains all $A_\frq(\lambda)$ modules in good range.
Later, it was shown that these modules characterize all unitary modules with strongly regular infinitesimal characters. 
Also, Barbasch and Pand\v zi\'c \cite{BP1, BP2} classified all unipotent representations appearing in $\widehat{G}^d$ for all complex groups and some real reductive groups.

\medskip
On the other hand, for any fixed $G$, Dong et. al. \cite{D3, DD1} reduced the study of $\widehat{G}^d$ to a
finite set called {\bf scattered representations} $\widehat{G}^{sc}$. More precisely, it is shown that
every representation in $\widehat{G}^d$ is either a scattered representation, or it is cohomologically induced from a
representation in $\widehat{M}^d$ for some proper theta-stable Levi subgroup $M$ of $G$ in the weakly good range. Using their results,
$\widehat{G}^d$ can be fully classified for all complex classical groups \cite{BDW, DW1, DW2}, $GL(n,\mathbb{R})$ \cite{DW4}, and for all real and complex exceptional groups except for split and complex $E_8$ (\cite{D2}, \cite{DD1}--\cite{DDY}, \cite{DW3}).

\medskip
In this manuscript, we finish the classification of $\widehat{G}^d$ for all complex simple groups 
by studying $\widehat{G}^d$ for Type $E_8$. Although this is a
finite calculation, currently \texttt{atlas} software runs out of
 memory on a usual desktop computer for complex $E_8$. Therefore, we need to take a completely different perspective from that of the aforementioned literature.

\medskip
Instead, we adopt a more conceptual approach by doing standard linear algebra using \texttt{mathematica}. This is a generalization of the techniques in \cite{BDW} and applies to all cases. More explicitly, we use bottom layer
$K-$type arguments to reduce the classification problem to the spherical case and a few nonspherical ones. These $K-$types are useful in reducing the problem to some proper Levi components of $G$. When the Levi component is formed of simple factors of classical type, the results in \cite{BDW} apply. For the remaining cases, we use intertwining operators detailed in \cite{B1} and \cite{BC}.

\medskip
As a consequence, we obtain the main result of this manuscript:
\begin{theorem}[Propositions \ref{cor-a}, \ref{cor-d} and \ref{cor-e}]
Let $G$ be the complex Lie group of Type $E_8$. The representations in $\widehat{G}^d$ are the lowest $K-$type subquotients of modules parabolically induced from a unitary character tensored with a unipotent representation with nonzero Dirac cohomology listed in \cite[Theorem 1.1]{DW2} (for Levi subgroups of Type $D$) and \cite[Section 5.7 -- 5.9]{BP1} (for Levi subgroups of Type $E$). 
\end{theorem}

As a consequence, the main result of \cite{BDW} implies that the Dirac cohomology of the representations in $\widehat{G}^d$ consists of a single $\widetilde{K}-$type occurring with multiplicity $\displaystyle{2^{\lfloor \frac{\mathrm{rank}(\mathfrak{g})}{2}\rfloor}}$.

\section{Preliminaries}
\subsection{Complex groups}
Let $G$ be a connected complex simple Lie group viewed as a real Lie group. Fix a Borel subgroup $B$ of $G$
containing a Cartan subgroup $H$. Fix a Cartan
involution $\theta$ of $G$, and write $K:=G^{\theta}$ for the maximal
compact subgroup. Denote by $\frg_0=\frk_0\oplus\frp_0$ the corresponding Cartan
decomposition of the Lie algebra $\frg_0 = Lie(G)$, and by $\frh_0 = \frt_0 \oplus \fra_0$
be the Cartan decomposition of $\mathfrak{h}_0$. We remove the subscripts of the Lie algebras to denote their 
complexifications, and fix the identifications:
\begin{equation}\label{identifications}
\frg\cong \frg_{0} \oplus
\frg_0, \quad
\frh\cong \frh_{0} \oplus
\frh_0, \quad \frt\cong \{(x,-x) : x\in
\frh_{0} \}, \quad \fra \cong\{(x, x) : x\in
\frh_{0} \}.
\end{equation}

Put $
\Delta^{+}(\frg_0, \frh_0)=\Delta(\frb_0, \frh_0)
$ and let $\rho$ be the half sum of roots in $\Delta^{+}(\frg_0, \frh_0)$. Set
$$
\Delta^+(\frg, \frh)=\left(\Delta^+(\frg_0, \frh_0) \times \{0\}\right) \cup \left(\{0\} \times (-\Delta^+(\frg_0, \frh_0))\right),$$
and let $\rho_{\frg}$ be the half sum of roots in $\Delta^+(\frg, \frh)$. The restrictions to $\frt$ of these positive roots are
$$
\Delta^+(\frg, \frt)=\Delta^+(\frk, \frt)\cup\Delta^+(\frp, \frt).
$$
Denote by $\rho_c$ (resp., $\rho_n$) the half-sum of roots in $\Delta^+(\frk, \frt)$ (resp., $\Delta^+(\frp, \frt)$). Using the identifications in \eqref{identifications}, we have that
\begin{equation}\label{half-sums}
\rho_{\frg}=(\rho, -\rho), \quad \rho_c=\rho_n=\rho.
\end{equation}
We may and we will identify a $K-$type ($\widetilde{K}-$type, $\frk-$type, etc) $V_{\frk}(\eta)$ with its highest weight $\eta \in \Delta^{+}(\frk, \frt)$.

Let $(\lambda_{L}, \lambda_{R})\in \frh_0^{*}\times
\frh_0^{*}$ be such that $\lambda_{L}-\lambda_{R}$ is
a weight of a finite dimensional holomorphic representation of $G$.
 We view $(\lambda_L, \lambda_R)$ as a real-linear functional on $\frh$ by \eqref{identifications}, and write $\bbC_{(\lambda_L, \lambda_R)}$ as the character of $H$ with differential $(\lambda_L, \lambda_R)$. By \eqref{identifications} again, we have
$$
\bbC_{(\lambda_L, \lambda_R)}|_{T}=\bbC_{\lambda_L-\lambda_R}, \quad \bbC_{(\lambda_L, \lambda_R)}|_{A}=\bbC_{\lambda_L+\lambda_R}.
$$
Extend $\bbC_{(\lambda_L, \lambda_R)}$ to a character of $B$, and put $$X(\lambda_{L}, \lambda_{R})
:=K\mbox{-finite part of Ind}_{B}^{G}
(
\bbC_{(\lambda_L, \lambda_R)}  \otimes {\bf 1}
) \mbox{ (normalized induction)}.
$$

Using Frobenius reciprocity, the $K-$type with extremal weight $\lambda_{L}-\lambda_{R}$
occurs with multiplicity one in
$X(\lambda_{L}, \lambda_{R})$. Let
$J(\lambda_L,\lambda_R)$ be the unique subquotient of
$X(\lambda_{L}, \lambda_{R})$ containing the
$K-$type $V_{\frk}(\{\lambda_L -\lambda_R\})$ (here $\{\xi\}$ is the unique dominant weight to which $\xi$ is conjugate under the Weyl group action).

\begin{theorem}\label{thm-Zh} {\rm (Zhelobenko \cite{Zh})}
In the above setting, we have that
\begin{itemize}
\item[a)] Every irreducible admissible ($\frg$, $K$)-module is of the form $J(\lambda_L,\lambda_R)$.
\item[b)] Two such modules $J(\lambda_L,\lambda_R)$ and
$J(\lambda_L^{\prime},\lambda_R^{\prime})$ are equivalent if and
only if there exists $w\in W$ such that
$w\lambda_L=\lambda_L^{\prime}$ and $w\lambda_R=\lambda_R^{\prime}$.
\item[c)] $J(\lambda_L, \lambda_R)$ admits a nondegenerate Hermitian form if and only if there exists
$w\in W$ such that $w(\lambda_L-\lambda_R) =\lambda_L-\lambda_R , w(\lambda_L+\lambda_R) = -\overline{(\lambda_L+\lambda_R)}$.
\item[d)] The representation $X(\lambda_{L}, \lambda_{R})$ is tempered if and only if $\lambda_{L}+\lambda_{R}\in i\frh_0^*$. In this case,
$X(\lambda_{L}, \lambda_{R})=J(\lambda_{L}, \lambda_{R})$.
\end{itemize}
\end{theorem}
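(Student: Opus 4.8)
The plan is to reduce the theorem to known structural facts about principal series of complex reductive groups, treating each of the four parts in turn and leaning on the explicit identifications in \eqref{identifications}--\eqref{half-sums}. For part a), I would start from the fact that every irreducible admissible $(\frg,K)$-module is a subquotient of some principal series $X(\lambda_L,\lambda_R)$ (Harish-Chandra's subquotient theorem applied to the complex group, where all principal series are induced from characters of $B$). The point is then to pin down \emph{which} subquotient: by Frobenius reciprocity the $\widetilde K$-type of extremal weight $\lambda_L-\lambda_R$ appears with multiplicity one in $X(\lambda_L,\lambda_R)$, so exactly one irreducible subquotient contains $V_\frk(\{\lambda_L-\lambda_R\})$, and that is $J(\lambda_L,\lambda_R)$ by definition. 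One then has to argue that every irreducible module arises this way by choosing, for a given $\pi$, a minimal $K$-type and exhibiting $\pi$ as the Langlands/lowest-$K$-type subquotient of the principal series attached to its Zhelobenko parameters; this is where I expect to invoke \cite{Zh} directly rather than reprove it.

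For part b), the strategy is the standard one: equivalence of $J(\lambda_L,\lambda_R)$ and $J(\lambda_L',\lambda_R')$ forces the infinitesimal characters to agree, which (via Harish-Chandra) means the pairs $(\lambda_L,\lambda_R)$ and $(\lambda_L',\lambda_R')$ are $W\times W$-conjugate; the refinement to a \emph{single} $w\in W$ acting diagonally comes from matching the distinguished lowest $\widetilde K$-type $V_\frk(\{\lambda_L-\lambda_R\})$, since its highest weight is a $W$-invariant of the module. Conversely, if such a $w$ exists then the two principal series are intertwined and the distinguished subquotients correspond. Part d) is essentially a computation: $X(\lambda_L,\lambda_R)$ is unitarily induced from the character $\bbC_{(\lambda_L,\lambda_R)}$ of $B$, and this character is unitary on $A$ precisely when $\lambda_L+\lambda_R\in i\frh_0^*$ (using $\bbC_{(\lambda_L,\lambda_R)}|_A=\bbC_{\lambda_L+\lambda_R}$); unitary induction from a unitary character of a minimal parabolic of a complex group is irreducible and tempered, giving both $X=J$ and temperedness.

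Part c) is the one requiring genuine input, and I expect it to be the main obstacle. The existence of a nondegenerate invariant Hermitian form on $J(\lambda_L,\lambda_R)$ is controlled by whether the module is isomorphic to its Hermitian dual $\overline{J(\lambda_L,\lambda_R)}^h$. I would compute that the Hermitian dual of $X(\lambda_L,\lambda_R)$ is the principal series attached to the parameter obtained by applying $-\overline{(\cdot)}$, i.e. with $T$-parameter $-\overline{(\lambda_L-\lambda_R)}$ and $A$-parameter $-\overline{(\lambda_L+\lambda_R)}$; passing to the distinguished subquotient and invoking part b), the module $J(\lambda_L,\lambda_R)$ is Hermitian iff there is $w\in W$ carrying the pair $(\lambda_L-\lambda_R,\ \lambda_L+\lambda_R)$ to $(-\overline{(\lambda_L-\lambda_R)},\ -\overline{(\lambda_L+\lambda_R)})$, up to the appropriate reindexing. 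Since $\lambda_L-\lambda_R$ is integral (a weight of a finite-dimensional representation, hence real and $W$-stable under $-\overline{(\cdot)}$ only after noting $\overline{\lambda_L-\lambda_R}=\lambda_L-\lambda_R$), the first condition collapses to $w(\lambda_L-\lambda_R)=\lambda_L-\lambda_R$, and the second becomes $w(\lambda_L+\lambda_R)=-\overline{(\lambda_L+\lambda_R)}$, which is exactly the stated criterion. The delicate point is bookkeeping with the two copies of $\frh_0$ in \eqref{identifications} and making sure the conjugation $-\overline{(\cdot)}$ interacts correctly with the $\frt$- versus $\fra$-components; once that is set up cleanly the argument is formal, and again the cited reference \cite{Zh} can be used to certify the final equivalence rather than re-deriving the full classification.
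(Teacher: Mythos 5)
The paper does not actually prove this theorem: it is quoted verbatim from Zhelobenko's book \cite{Zh}, so there is no in-paper argument to compare yours against. Your overall route (Harish--Chandra subquotient theorem plus multiplicity one of the extremal $K$-type for (a), reduction of (c) to (b) via the Hermitian dual of the principal series, which replaces the $A$-parameter $\lambda_L+\lambda_R$ by $-\overline{(\lambda_L+\lambda_R)}$ and fixes the $T$-parameter, and unitarity of the inducing character for (d)) is the standard one and is consistent with the cited source; in particular your treatment of (c) is exactly how this criterion is usually derived once (b) is available. Note also that in (d) the irreducibility of the unitary principal series of a complex group is itself a substantive theorem (Zhelobenko/Wallach), not a formality, so there too you are ultimately invoking \cite{Zh} rather than proving anything.

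There is, however, a genuine gap in the argument you sketch for the ``only if'' direction of (b). You claim the refinement from $(\lambda_L',\lambda_R')=(w_1\lambda_L,w_2\lambda_R)$ to a single diagonal $w$ ``comes from matching the distinguished lowest $\widetilde K$-type,'' i.e.\ from the equality $\{\lambda_L-\lambda_R\}=\{\lambda_L'-\lambda_R'\}$ together with equality of infinitesimal characters. These two invariants do not determine the module, so this step fails. For example, take $\frg_0=\mathfrak{sl}(3,\bbC)$, $\lambda_L=\rho=(1,0,-1)$, and the two simple reflections $s_1,s_2\in W$: the modules $J(\rho,s_1\rho)$ and $J(\rho,s_2\rho)$ have the same infinitesimal character (the $W\times W$-orbit of $(\rho,\rho)$) and the same lowest $K$-type, since $\{\rho-s_1\rho\}=\{(1,-1,0)\}=(1,0,-1)=\{(0,1,-1)\}=\{\rho-s_2\rho\}$, yet $(\rho,s_1\rho)$ and $(\rho,s_2\rho)$ are not diagonally $W$-conjugate, so by the very statement you are trying to prove the modules are not isomorphic. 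Thus the diagonal-conjugacy criterion cannot be extracted from the pair (infinitesimal character, lowest $K$-type); its proof requires the finer analysis of the composition series of $X(\lambda_L,\lambda_R)$ (intertwining operators, or a comparison of full $K$-spectra/characters), which is precisely the content of \cite{Zh}. Since both the paper and your write-up ultimately rest on that citation, the honest course is to cite \cite{Zh} for (a) and (b) outright and present only (c) and (d) as formal consequences, rather than offering the lowest-$K$-type matching as if it were a proof of (b).
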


Note that $J(\lambda_L,\lambda_R)$ has lowest $K-$type $V_{\frk}(\{\lambda_L-\lambda_R\})$ and infinitesimal character the $W\times W$ orbit of $(\lambda_L, \lambda_R)$.

\subsection{Dirac cohomology}
We recall the construction of Dirac operator and Dirac cohomology.
Let $\langle\ ,\rangle$ be an invariant nondegenerate form such that $\langle\ ,\
\rangle\mid_{\frp_0}$ is positive definite, and $\langle\ ,\
\rangle\mid_{\frk_0}$ is negative definite. Fix an orthonormal basis  $Z_1,\dots, Z_n$
 of $\frp_0$. Let
$U(\frg)$ be the universal enveloping algebra of $\frg$, and $C(\frp)$
be the Clifford algebra of $\frp$ with respect to $\langle\,
,\,\rangle$. The {\bf Dirac operator}
$D\in U(\frg)\otimes C(\frp)$ is defined as
$$D=\sum_{i=1}^{n}\, Z_i \otimes Z_i.$$
The operator  $D$ does not depend on the choice of the
orthonormal basis $Z_i$ and  is $K-$invariant for the diagonal
action of $K$ induced by the adjoint actions on both factors.

\medskip
Define  $\Delta: \frk \to U(\frg)\otimes C(\frp)$ by $\Delta(X)=X\otimes
1 + 1\otimes \alpha(X)$, where $\alpha:\frk\to C(\frp)$ is the
composition of $\ad:\frk\longrightarrow \mathfrak{so}(\frp)$ with the embedding  $\mathfrak{so}(\frp)\cong\wedge^2(\frp)\hookrightarrow C(\frp)$. Write $\frk_{\Delta}:=\Delta(\frk)$, and denote by $\Omega_{\frg}$ (resp. $\Omega_{\frk}$)
the Casimir operator of $\frg$ (resp. $\frk$). Let $\Omega_{\frk_{\Delta}}$ be the image of $\Omega_{\frk}$ under $\Delta$. Then \cite{P1} implies that
\begin{equation*}\label{D-square}
D^2=-\Omega_{\frg}\otimes 1 + \Omega_{\frk_{\Delta}} + (\|\rho_c\|^2-\|\rho_{\frg}\|^2) 1\otimes 1,
\end{equation*}
where $\rho_{\frg}$ and $\rho_c$ are the corresponding half sums of positive roots of $\frg$ and $\frk$.

Let
$$
\widetilde{K}:=\{ (k,s)\in K\times {\rm Spin}(\frp_0)\ :\ \Ad (k)=p(s)\},
$$
where $p: \text{Spin}(\frp_0)\rightarrow \text{SO}(\frp_0)$ is the spin double covering map. If $\pi$ is a
($\frg$, $K$)-module, and if $S_G$ denotes a spin module for
$C(\frp)$, then $\pi\otimes S_G$ is a $(U(\frg)\otimes C(\frp),
\widetilde{K})$ module.

The action of $U(\frg)\otimes C(\frp)$ is
the obvious one, and $\widetilde{K}$ acts on both factors; on $\pi$
through $K$ and on $S_G$ through the spin group
$\text{Spin}\,{\frp_0}$.
The Dirac operator acts on $\pi\otimes S_G$. The Dirac
cohomology of $\pi$ is defined as the $\widetilde{K}-$module
\begin{equation*}\label{def-Dirac-cohomology}
H_D(\pi)=\text{Ker}\, D/ (\text{Im} \, D \cap \text{Ker} D).
\end{equation*}

The following foundational result on Dirac cohomology, conjectured
by Vogan,  was proven by Huang and Pand\v zi\'c in 2002:

\begin{theorem}[\cite{HP1} Theorem 2.3]\label{thm-HP}
Let $\pi$ be an irreducible ($\frg$, $K$)-module.
Assume that the Dirac
cohomology of $\pi$ is nonzero, and that it contains the $\widetilde{K}-$type with highest weight $\gamma\in\frt^{*}\subset\frh^{*}$. Then the infinitesimal character of $\pi$ is conjugate to
$\gamma+\rho_{c}$ under $W(\frg,\frh)$.
\end{theorem}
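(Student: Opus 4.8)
The plan is to read off the infinitesimal character from the square formula \eqref{D-square}: first at the level of lengths, and then, using the whole center $Z(\frg)$ of $U(\frg)$, at the level of $W(\frg,\frh)$-orbits.

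First I would note that, since $D$ commutes with the $\widetilde{K}$-action on $\pi\otimes S_G$ (being $K$-invariant for the diagonal action), $\Ker D$ and $\image D$ are $\widetilde{K}$-submodules and $H_D(\pi)$ is a genuine $\widetilde{K}$-module. Let $\Lambda\in\frh^{*}$ represent the infinitesimal character of $\pi$, so that $\Omega_{\frg}$ acts on $\pi$, hence on $\pi\otimes S_G$, by the scalar $\|\Lambda\|^{2}-\|\rho_{\frg}\|^{2}$. On the $\widetilde{K}$-isotypic component of highest weight $\gamma\in\frt^{*}$ the operator $\Omega_{\frk_{\Delta}}$ acts by $\|\gamma+\rho_c\|^{2}-\|\rho_c\|^{2}$, so by \eqref{D-square} $D^{2}$ acts on that component by $\|\gamma+\rho_c\|^{2}-\|\Lambda\|^{2}$. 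Since $D^{2}$ vanishes on $\Ker D$ and a $\widetilde{K}$-type occurring in $H_D(\pi)=\Ker D/(\image D\cap\Ker D)$ in particular occurs in $\Ker D$, the hypothesis forces $\|\gamma+\rho_c\|^{2}=\|\Lambda\|^{2}$. This pins down the length of $\Lambda$ but not yet its $W(\frg,\frh)$-orbit; note that only the irreducibility of $\pi$ was used, not unitarity.

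To upgrade this I would bring in the full center. The key step is to construct an algebra homomorphism $\zeta\colon Z(\frg)\to Z(\frk_{\Delta})$ such that
\[
z\otimes 1-\zeta(z)\in D\,\bigl(U(\frg)\otimes C(\frp)\bigr)+\bigl(U(\frg)\otimes C(\frp)\bigr)\,D\qquad(z\in Z(\frg)).
\]
Granting this, $z\otimes 1$ (central in $U(\frg)$) and $\zeta(z)$ (lying in $U(\frk_{\Delta})$, which commutes with the $\frk_{\Delta}$-invariant operator $D$) both commute with $D$, hence descend to operators on $H_D(\pi)$, and the displayed relation forces those descended operators to coincide. But $z\otimes 1$ acts on $\pi\otimes S_G$ by the infinitesimal-character scalar of $\pi$ at $z$, while $\zeta(z)\in Z(\frk_{\Delta})$ acts on the $\widetilde{K}$-type of highest weight $\gamma$ by the $\frk$-infinitesimal-character scalar at parameter $\gamma+\rho_c$. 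Equating the two for all $z$, and identifying $\zeta$ through the Harish-Chandra isomorphisms with the restriction of $W(\frg,\frh)$-invariant polynomials on $\frh^{*}$ to $W(\frk,\frt)$-invariant polynomials on $\frt^{*}\subset\frh^{*}$, one obtains $p(\Lambda)=p(\gamma+\rho_c)$ for every $W(\frg,\frh)$-invariant polynomial $p$ on $\frh^{*}$. Since the invariants separate the $W$-orbits (Chevalley), $\Lambda$ and $\gamma+\rho_c$ lie in a single $W(\frg,\frh)$-orbit, which is the assertion.

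\textbf{The main obstacle} will be the construction of $\zeta$ together with its exact identification with restriction: one must show that each central element of $U(\frg)$ becomes, modulo the two-sided coboundary ideal $D\,(U(\frg)\otimes C(\frp))+(U(\frg)\otimes C(\frp))\,D$, a central element of $U(\frk_{\Delta})$, and that the resulting map is restriction on the nose, not merely up to lower-order corrections. I would approach this by induction on the natural filtration of $U(\frg)\otimes C(\frp)$: the degree-two case is exactly \eqref{D-square}, which yields $\Omega_{\frg}\otimes 1\equiv\Omega_{\frk_{\Delta}}$ modulo coboundaries and scalars, and one propagates to higher filtration degrees using Kostant's description of the invariants $(U(\frg)\otimes C(\frp))^{\frk_{\Delta}}$ and the fact that, on the associated graded, the leading part of $a\mapsto Da+aD$ is a Koszul-type differential whose cohomology is controlled by $Z(\frk_{\Delta})$. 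Once $\zeta$ is produced and pinned down, everything else is formal.
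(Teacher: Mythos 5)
This theorem is not proved in the paper at all: it is quoted verbatim from \cite{HP1} (Theorem 2.3), so there is no internal argument to compare yours with, only the original Huang--Pand\v{z}i\'c proof. Your sketch is essentially that original argument: the norm identity extracted from \eqref{D-square} is Parthasarathy's estimate, and the upgrade to the full $W(\frg,\frh)$-orbit via a homomorphism $\zeta\colon Z(\frg)\to Z(\frk_{\Delta})$ with $z\otimes 1-\zeta(z)$ a ``Dirac coboundary'', identified with restriction of $W(\frg,\frh)$-invariant polynomials to $\frt^{*}$ under the two Harish-Chandra isomorphisms, is precisely the algebraic form of Vogan's conjecture that \cite{HP1} establishes; the descent of $z\otimes 1$ and $\zeta(z)$ to $H_D(\pi)$ and the separation of orbits by invariants are then formal, as you say. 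The step you single out as the main obstacle is indeed the entire substance of \cite{HP1}, and the route you propose for it is the one they take: one shows that $a\mapsto Da-\epsilon_a aD$ is a differential on the $K$-invariants of $U(\frg)\otimes C(\frp)$ whose cohomology reduces, after passing to the associated graded where it becomes a Koszul-type differential, to $Z(\frk_{\Delta})$, and one pins down $\zeta$ as restriction by comparing top-order symbols through the Harish-Chandra maps. So your outline is correct and contains no wrong turns; its hard content is exactly a reproof of the cited theorem rather than anything new, which is appropriate here since the paper simply imports the result.
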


\subsection{Unitary modules with Dirac cohomology}\label{sec-spin}
Let $\pi$ be an irreducible $(\frg,K)-$module for a complex Lie group $G$.
By Theorem \ref{thm-HP} and \eqref{identifications}, $\pi$ has Dirac cohomology
if and only if its Zhelobenko parameter $(w_1\la_L,w_2\la_R)$ satisfies
\begin{equation} \label{eq-HPcomplex}
\begin{cases}
  w_1\la_L-w_2\la_R=\tau +\rho\\
  w_1\la_L +w_2\la_R=0,
\end{cases}
\end{equation}
where $V_{\mathfrak{k}}(\tau)$ is a $\widetilde{K}-$type in $H_D(\pi)$. The second equation implies $\la_R=-w_2^{-1}w_1\la_L.$ Since $\tau
+\rho$ is regular integral, the first equation implies that
\begin{equation} \label{eq-HP}
2\la := 2w_1\la_L = \tau + \rho 
\end{equation}
is regular integral.

Consequently, the module can be written as $\pi = J(\la,-s\la)$ with $2\la$ regular integral,
and the first equation of \eqref{eq-HPcomplex} implies that the {\it only} $\widetilde{K}-$type
that can appear in $H_D(\pi)$ is 
$V_{\mathfrak{k}}(2\la - \rho)$.
Furthermore, if $J(\la,-s\la)$ is Hermitian (e.g. if $J(\la,-s\la)$ is unitary),
it follows as in \cite{BP1} that $s$ is an involution.

\subsection{Bottom Layer $K-$types} \label{sec-bottom}
For the rest of this paper, we use the following Dynkin diagram and simple roots of $E_8$:
\begin{equation} \label{eq-e8}
\begin{tikzpicture}
\draw
    (0,0) node[circle,fill=black,inner sep=0pt,minimum size=3pt,label=below:{$\alpha_1$}] {}
 -- (1,0) node[circle,fill=black,inner sep=0pt,minimum size=3pt,label=below:{$\alpha_3$}] {}
--   (2,0) node[circle,fill=black,inner sep=0pt,minimum size=3pt,label=below:{$\alpha_4$}] {}
 -- (3,0) node[circle,fill=black,inner sep=0pt,minimum size=3pt,label=below:{$\alpha_5$}] {}
-- (4,0) node[circle,fill=black,inner sep=0pt,minimum size=3pt,label=below:{$\alpha_6$}] {}
-- (5,0) node[circle,fill=black,inner sep=0pt,minimum size=3pt,label=below:{$\alpha_7$}] {}
-- (6,0) node[circle,fill=black,inner sep=0pt,minimum size=3pt,label=below:{$\alpha_8$}] {};

\draw
 (2,0.7) node[circle,fill=black,inner sep=0pt,minimum size=3pt,label=above:{$\alpha_2$}] {}
--   (2,0) node {};
\end{tikzpicture}
\end{equation}

By the discussions in Section \ref{sec-spin}, one only focuses on $\pi = J(\la,-s\la)$ where $2\la$ is regular
integral, and $s \in W$ is an involution. Conjugate $\la + s\la$ such that
\begin{equation} \label{eq-etal}
\eta := \{\la + s\la\} = [k_1, k_2, \dots, k_8] := k_1 \omega_1 + k_2 \omega_2 + \dots k_8 \omega_8,
\end{equation}
is a dominant weight (here $k_i \in \mathbb{N}$, and $\omega_i$ are the fundamental weights of $E_8$).

Let $M$ be the Levi subgroup of $G$ determined by the nodes 
\begin{equation} \label{eq-m}
I(M) := \{i\ |\ \langle \alpha_i, \eta \rangle = k_i = 0\}
\end{equation}
of the Dynkin diagram \eqref{eq-e8}. Suppose $M = \mathcal{F}_1 \times \dots \times \mathcal{F}_r \times (\mathbb{C}^*)^s$, 
where each $\mathcal{F}_{\ell}$ is a simple group. Then one can choose $\nu_{\ell} \in \frh_{\ell}^*$ (here $\frh_{\ell}$ is the Cartan subalgebra of $\mathcal{F}_{\ell}$), and a unitary character $\mathbb{C}_{\tau(\eta)}$ of $(\mathbb{C}^*)^s$ such that the induced module
\begin{equation} \label{eq-fl}
\mathrm{Ind}_{MN}^G\left(\bigotimes_{\ell =1}^r J_{\mathcal{F}_{\ell}}(\nu_{\ell},\nu_{\ell}) \otimes \mathbb{C}_{\tau(\eta)} \otimes {\bf 1} \right),
\end{equation}
has the same infinitesimal character and lowest $K-$type as $\pi$. Consequently, $\pi$ appears as the lowest $K-$type
subquotient of \eqref{eq-fl}.
By Theorem \ref{thm-Zh}(b), one can further assume that $2\nu_{\ell}$ is regular and integral
for each spherical module $J_{\mathcal{F}_{\ell}}(\nu_{\ell},\nu_{\ell})$
in \eqref{eq-fl}.

\begin{proposition} \label{prop-bottomlayer}
Let $\pi = J(\la,-s\la)$ be an irreducible Hermitian $(\frg,K)-$module with $2\la$ regular and integral.
Consider the induced module \eqref{eq-fl} corresponding to $\pi$, where $2\nu_{\ell}$ are chosen to be regular and integral for all $1 \leq \ell \leq r$. Suppose 
$$\langle 2\nu_{\ell}, \alpha \rangle \notin \{1, 2\},$$
for some simple root $\alpha$ corresponding to $\mathcal{F}_{\ell}$, then $\pi$ is not unitary. 
\end{proposition}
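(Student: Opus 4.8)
The plan is to reduce the non-unitarity of $\pi$ to the non-unitarity of one of the spherical factors $J_{\mathcal{F}_\ell}(\nu_\ell, \nu_\ell)$, and then to invoke the known unitarity criterion for spherical representations of complex simple groups. First I would recall the bottom-layer $K$-type mechanism: since $\pi$ occurs as the lowest $K$-type subquotient of the induced module \eqref{eq-fl}, and since induction in stages together with the fact that each $\nu_\ell$ is chosen dominant means the inducing data is in the weakly good (indeed weakly fair) range, the Hermitian form on $\pi$ restricted to the bottom layer of $(M\cap K)$-types coincides, up to a positive scalar, with the tensor product of the Hermitian forms on the factors $J_{\mathcal{F}_\ell}(\nu_\ell, \nu_\ell)$ and the (positive-definite) form on the unitary character $\mathbb{C}_{\tau(\eta)}$. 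This is exactly the bottom-layer argument of \cite{D3}, \cite{DD1}: the signature character of $\pi$ on bottom-layer $K$-types is the external tensor product of the signature characters of the factors. Hence if some factor $J_{\mathcal{F}_\ell}(\nu_\ell, \nu_\ell)$ is non-unitary, the indefiniteness is detected already on a bottom-layer $K$-type of $\pi$, so $\pi$ is non-unitary.

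The second step is to show that the hypothesis $\langle 2\nu_\ell, \alpha\rangle \notin \{1,2\}$ for some simple root $\alpha$ of $\mathcal{F}_\ell$ forces $J_{\mathcal{F}_\ell}(\nu_\ell, \nu_\ell)$ to be non-unitary. Here I would use the classification of the spherical unitary dual of complex simple groups via the petite (relevant) $K$-types and the corresponding intertwining-operator computations of \cite{B1}, \cite{BC}: for a spherical module $J_{\mathcal{F}}(\nu,\nu)$ with $2\nu$ dominant regular integral, unitarity implies that the operator on each relevant $K$-type is positive semidefinite, and for the $K$-type attached to a simple root $\alpha$ the corresponding factor of the operator is (up to positive scalars) a product over the $\alpha$-string of terms of the form $\frac{1 - \langle 2\nu, \check\alpha\rangle/j}{1 + \langle 2\nu, \check\alpha\rangle/j}$ — more precisely, the signature is governed by the value $\langle 2\nu, \alpha\rangle$ (using that $\alpha$ is long in the simply-laced case so $\check\alpha$ is identified with $\alpha$). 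The point is the elementary calculation for $SL(2,\mathbb{C})$: the spherical module with parameter $t = \langle 2\nu, \alpha\rangle \in \mathbb{Z}_{>0}$ is unitary only for $t = 1$, and in the non-spherical (but here irrelevant) considerations $t = 2$ arises as a boundary/reducibility case that still survives in an induced setting; for $t \geq 3$ the $SL(2,\mathbb{C})$-operator already has a sign change, and this sign change propagates to $\mathcal{F}_\ell$ and hence, by step one, to $\pi$.

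In more detail, for step two I would embed the relevant rank-one computation: restrict the spherical Hermitian form on $J_{\mathcal{F}_\ell}(\nu_\ell, \nu_\ell)$ to the $SL(2,\mathbb{C})$ (or the minimal parabolic of it) attached to $\alpha$, where the parameter is $\langle 2\nu_\ell, \alpha\rangle$. For the spherical principal series of $SL(2,\mathbb{C})$ with real parameter $m$, the Langlands quotient is unitary precisely for $m \in \{0\} \cup (0,1]$; when $m \in \mathbb{Z}$ (as here $2\nu_\ell$ is integral) this leaves only $m \in \{0, 1\}$, and since $2\nu_\ell$ is also regular, $m \neq 0$, so only $m = 1$ gives unitarity from the $\alpha$-computation alone — and $m = 2$ is the smallest value at which the relevant operator vanishes rather than changes sign, which is why it must also be excluded from the hypothesis. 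For $m = \langle 2\nu_\ell, \alpha\rangle \geq 3$, or more generally $m \notin \{1,2\}$ with $m$ a positive integer (so $m \geq 3$), the operator on the $K$-type $V_{\mathcal{F}_\ell \cap K}$ attached to $\alpha$ has a strictly negative eigenvalue, so $J_{\mathcal{F}_\ell}(\nu_\ell,\nu_\ell)$ is non-unitary. Combining with step one yields the claim.

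The main obstacle I anticipate is step one — making precise and rigorous that the Hermitian form on $\pi$ restricted to bottom-layer $(M\cap K)$-types is literally (a positive multiple of) the external tensor product of the forms on the factors, without gap from the induction functor or from the passage to the lowest $K$-type subquotient. The subtlety is that $J(\la, -s\la)$ is only a \emph{subquotient} of \eqref{eq-fl}, not the full induced module, so one must check that the relevant bottom-layer $K$-types survive in the subquotient with the same multiplicity and that the form does not degenerate on them; this is handled by the signature-theoretic bottom-layer results of \cite{D3} and \cite{DD1} together with the weakly good range hypothesis (ensured by choosing each $\nu_\ell$ dominant), which guarantees that cohomological induction, or equivalently the relevant Jantzen-type filtration argument, preserves unitarity and signatures on the bottom layer. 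Everything else — the $SL(2,\mathbb{C})$ computation and the reduction to relevant $K$-types — is standard and can be cited from \cite{B1}, \cite{BC}.
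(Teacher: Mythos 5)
There is a genuine gap, and it is in your step one. You assert that if some factor $J_{\mathcal{F}_{\ell}}(\nu_{\ell},\nu_{\ell})$ is non-unitary, then ``the indefiniteness is detected already on a bottom-layer $K$-type of $\pi$.'' This is exactly what is \emph{not} automatic: the bottom-layer mechanism only transfers the signature on an $(M\cap K)$-type $V_{\frf_{\ell}\cap\frk}(\sigma)$ to $\pi$ when $\eta+\sigma$ is dominant for $G$, i.e.\ $M$-bottom layer, and for a general non-unitarity certificate $\sigma$ this can fail. Indeed the whole of Sections 3--5 of the paper exists because of this failure; for instance in Remark \ref{rmk-d6} the factor $J_{A_5}(\nu_3,\nu_3)$ is non-unitary, yet for $k_2=1$ the certificate is not bottom layer and the subquotient $\pi$ turns out to be \emph{unitary}. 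So the implication ``some factor non-unitary $\Rightarrow$ $\pi$ non-unitary'' that your argument rests on is false as stated, and no appeal to weakly good range or cohomological induction repairs it (the induction here is ordinary real parabolic induction anyway).

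What the paper's proof supplies, and what your proposal is missing, are two specific ingredients. First, under the hypothesis $\langle 2\nu_{\ell},\alpha\rangle\notin\{1,2\}$ (hence $\geq 3$ by regularity and integrality), the deformation argument of \cite{BDW} locates the indefiniteness on a \emph{particular} $(\mathcal{F}_{\ell}\cap K)$-type, namely the adjoint representation $V_{\frf_{\ell}\cap\frk}(\delta)$ --- not merely ``somewhere.'' Second, one must check that for every simple factor type that can occur in a Levi of $E_8$ ($A_p$, $D_p$, $E_6$, $E_7$) and every lowest $K$-type $\eta=[k_1,\dots,k_8]$, the weight $\eta+\delta$ is $E_8$-dominant; this uses the explicit expansion of $\delta$ in simple roots and the key numerical fact that the coefficient of any node of $\mathcal{F}_{\ell}$ adjacent to a node $j\notin I(M)$ equals $1$, so $\langle\eta+\delta,\alpha_j\rangle=k_j-1\geq 0$. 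Only then does the bottom-layer comparison of forms on $V_{\frk}(\eta)$ and $V_{\frk}(\eta+\delta)$ force $\pi$ to be non-unitary. Your step two (the rank-one $SL(2,\mathbb{C})$ reduction) is also imprecise --- with $m=\langle 2\nu,\check\alpha\rangle$ the spherical unitary range is $[0,2]$, not $\{0\}\cup(0,1]$, and a single-root restriction by itself does not control on which $K$-type of $\mathcal{F}_{\ell}$ the form becomes indefinite --- but that could be patched by citing \cite{BDW}; the uncloseable gap is the missing identification of the certificate as the adjoint $K$-type together with the dominance verification of $\eta+\delta$.
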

\begin{proof}
By the hypothesis of $2\nu_{\ell}$, and the deformation arguments in
\cite{BDW} (which can be generalized to all complex groups),
$J_{\mathcal{F}_{\ell}}(\nu_{\ell},\nu_{\ell})$ is not unitary on the
level of the adjoint $(\mathcal{F}_{\ell} \cap K)-$type $V_{\frf_{\ell} \cap \frk}(\delta)$. 

We claim that for all possible simple components $\mathcal{F}_{\ell}$ of a Levi subgroup of $E_8$, $\eta + \delta$ is {\bf $M-$bottom layer} for all $\eta$ of the form given in \eqref{eq-etal}. Indeed, the adjoint representation for each $\mathcal{F}_{\ell}$ can be written as a sum of simple roots by:
\begin{center}
\begin{longtable}{|c|c|c|c|}
\hline
Type & Dynkin diagram & Highest weight $\delta$ of adjoint representation \\
\hline \hline
$A_p$ & 
\begin{tikzpicture}
\draw
    (0,0) node[circle,fill=black,inner sep=0pt,minimum size=3pt,label=below:{$\beta_1$}] {}
 -- (1,0) node[circle,fill=black,inner sep=0pt,minimum size=3pt,label=below:{$\beta_2$}] {};
 
\draw (1.5,0) node {$\cdots$};

\draw
    (2,0) node[circle,fill=black,inner sep=0pt,minimum size=3pt,label=below:{$\beta_{p-1}$}] {}
 -- (3,0) node[circle,fill=black,inner sep=0pt,minimum size=3pt,label=below:{$\beta_p$}] {};
\end{tikzpicture} &
$\beta_1 + \beta_2 + \dots + \beta_{p-1} + \beta_p$\\

\hline
$D_p$ & 
\begin{tikzpicture}
\draw
    (-1,0) node[circle,fill=black,inner sep=0pt,minimum size=3pt,label=below:{$\beta_1$}] {}
 -- (0,0) node[circle,fill=black,inner sep=0pt,minimum size=3pt,label=below:{$\beta_2$}] {};
 
\draw (0.5,0) node {$\cdots$};

\draw
    (1,0) node[circle,fill=black,inner sep=0pt,minimum size=3pt,label=below:{$\beta_{p-3}$}] {}
 -- (2,0) node {};

\draw
 (2,0.7) node[circle,fill=black,inner sep=0pt,minimum size=3pt,label=above:{$\beta_{p-1}$}] {}
--  (2,0) node[circle,fill=black,inner sep=0pt,minimum size=3pt,label=below:{$\beta_{p-2}$}] {}
 -- (3,0) node[circle,fill=black,inner sep=0pt,minimum size=3pt,label=below:{$\beta_p$}] {};
\end{tikzpicture} &
$\beta_1 + 2\beta_2 + \dots + 2\beta_{p-2} + \beta_{p-1} + \beta_p$\\

\hline
$E_6$ & 
\begin{tikzpicture}
\draw
    (0,0) node[circle,fill=black,inner sep=0pt,minimum size=3pt,label=below:{$\beta_1$}] {}
 -- (1,0) node[circle,fill=black,inner sep=0pt,minimum size=3pt,label=below:{$\beta_3$}] {}
--   (2,0) node[circle,fill=black,inner sep=0pt,minimum size=3pt,label=below:{$\beta_4$}] {}
 -- (3,0) node[circle,fill=black,inner sep=0pt,minimum size=3pt,label=below:{$\beta_5$}] {}
-- (4,0) node[circle,fill=black,inner sep=0pt,minimum size=3pt,label=below:{$\beta_6$}] {};

\draw
 (2,0.7) node[circle,fill=black,inner sep=0pt,minimum size=3pt,label=above:{$\beta_2$}] {}
--   (2,0) node {};
\end{tikzpicture} &
$\beta_1 + 2\beta_2 + 2\beta_3 + 3\beta_4 + 2\beta_5 + \beta_6$\\

\hline
$E_7$ & 
\begin{tikzpicture}
\draw
    (0,0) node[circle,fill=black,inner sep=0pt,minimum size=3pt,label=below:{$\beta_1$}] {}
 -- (1,0) node[circle,fill=black,inner sep=0pt,minimum size=3pt,label=below:{$\beta_3$}] {}
--   (2,0) node[circle,fill=black,inner sep=0pt,minimum size=3pt,label=below:{$\beta_4$}] {}
 -- (3,0) node[circle,fill=black,inner sep=0pt,minimum size=3pt,label=below:{$\beta_5$}] {}
-- (4,0) node[circle,fill=black,inner sep=0pt,minimum size=3pt,label=below:{$\beta_6$}] {}
-- (5,0) node[circle,fill=black,inner sep=0pt,minimum size=3pt,label=below:{$\beta_7$}] {};

\draw
 (2,0.7) node[circle,fill=black,inner sep=0pt,minimum size=3pt,label=above:{$\beta_2$}] {}
--   (2,0) node {};
\end{tikzpicture}  &
$2\beta_1 + 2\beta_2 + 3\beta_3 + 4\beta_4 + 3\beta_5 + 2\beta_6 + \beta_7$\\
\hline
\end{longtable}
\end{center}

It is straightforward  that 
\begin{enumerate}
\item[(a)] \fbox{$\langle \eta + \delta, \alpha_i \rangle = \langle \delta, \alpha_i \rangle \geq 0$ for all $i \in I(M)$}, since $\eta$ does not contain any fundamental $K-$types $\omega_i$, and $\delta$ is $M-$dominant weight.
\item[(b)] \fbox{$\langle \eta + \delta, \alpha_j \rangle = k_j + \langle \delta, \alpha_j \rangle \geq k_j - 1 \geq 0$ for all $j \notin I(M)$}.
\end{enumerate}

The $k_j$ are from \eqref{eq-etal}, and the first $\geq$ in (b) follows from the fact that the Dynkin diagram is simply laced, so that 
$\langle \alpha_i, \alpha_j \rangle = 0$ or $-1$ (with $-1$ whenever $\alpha_i$, $\alpha_j$ are linked in the diagram).

\medskip
For example, let $M$ be of Type $E_7$, so that the $\beta_i$ in the above table match with $\alpha_i$
in \eqref{eq-e8} { for $1\le i\le 7.$} Then $\eta = k\omega_8$ for $k \geq 1$, and
$$\eta + \delta = k\omega_8 + 2\alpha_1 + 2\alpha_2 + 3\alpha_3 + 4\alpha_4 + 3\alpha_5 + 2\alpha_6 + \alpha_7.$$
Therefore, $\langle \eta + \delta, \alpha_8\rangle = \langle k\omega_8 + \alpha_7, \alpha_8\rangle = k + \langle \alpha_7, \alpha_8\rangle = k-1 \geq 0$ for all $k$. In other words, $\eta + \delta$ is always $M-$bottom layer.

Consequently, this implies that the form on the induced module \eqref{eq-fl} as well as
$\pi$ is indefinite on $\eta$ and $\eta+\delta$; the result follows.
\end{proof}

\subsection{General strategy} \label{sec-strategy}
We summarize our strategy of determining $\widehat{G}^d$ for complex
$E_8$.
By Theorem \ref{thm-HP}, we focus on irreducible modules $\pi = J(\la_L,\la_R)$ such that $2\la_L$ is regular and integral.
\begin{itemize}
\item[(i)] As described in the
previous section, the lowest $K-$type $\eta$ of $\pi$ determines a Levi subgroup $M = \mathcal{F}_1 \times$ $\dots$ $\times$
$\mathcal{F}_r \times (\mathbb{C}^*)^s$ of $G$, so that $\pi$ is the lowest $K-$type subquotient of the induced module
\begin{equation}
  \label{eq:induced}
\mathrm{Ind}_{MN}^G\left(\pi_{sph} \otimes \mathbb{C}_{\tau(\eta)}
  \otimes {\bf 1} \right), \quad \pi_{sph} := \bigotimes_{\ell = 1}^r
J_{\mathcal{F}_{\ell}}(\nu_{\ell},\nu_{\ell}). 
\end{equation}
\item[(ii)] By Proposition \ref{prop-bottomlayer}, in order for $\pi$
  to be unitary, the spherical module $\pi_{sph}$ must satisfy
\begin{center}
$\langle \nu_{\ell}, \alpha \rangle =\frac{1}{2}$ or $1$ \end{center}
for all simple roots $\alpha$ of $\mathcal{F}_{\ell}$. In other words, there are finitely many candidates of $\pi_{sph}$. 
\item[(iii)] If $\pi_{sph}$ is unitary, then $\pi$ is automatically  unitary.
  Otherwise there are finitely many {\it non-unitarity certificates}. When $\eta$ is
  large enough, these certificates turn out to
  be bottom layer, so we are left with a finite set of $\eta$ that have to be dealt
  with differently. In most cases, the induced module \eqref{eq:induced} with these (finite) choices of $\eta$ and $\nu_{\ell}$ have infinitesimal characters whose
  $2\lambda_L-$values are either singular or non-integral, i.e. Equation \eqref{eq-HP} is violated. For instance, this is always the case when $M$ consists only of Type $A$ factors (see Section \ref{sec-A} below). 
  
  \item[(iv)] For the very few remaining cases, 
  we enlarge the Levi component, and embed the
  irreducible representation as the lowest $K-$type component in an
  induced module as in Equation \eqref{eq:induced}. We will show that either the 
  representations to be induced are unitary, or else we find a
  ``smallest''   $\eta+\sigma$ with signature opposite to $\eta$ that is bottom layer.
  \end{itemize}

\section{Type $A$ Levi subgroups} \label{sec-A}
This section is devoted to dealing with the cases when $M$ consists only of Type $A$ simple factors
$\mathcal{F}_1$, $\dots$, $\mathcal{F}_r$. We will study each $J_{\mathcal{F}_{\ell}}(\nu_{\ell},\nu_{\ell})$ in Step (i) of Section \ref{sec-strategy}
individually, where $\frf_{\ell} = \mathfrak{sl}(p+1,\mathbb{C})$ is of Type $A_p$ with simple roots:
\begin{center}
\begin{tikzpicture}
\draw
    (0,0) node[circle,fill=black,inner sep=0pt,minimum size=3pt,label=below:{$\beta_1$}]{}
 -- (1,0) node[circle,fill=black,inner sep=0pt,minimum size=3pt,label=below:{$\beta_2$}] {};
 --(1,0)
\draw (1.5,0) node {$\cdots$};

\draw
    (2,0) node[circle,fill=black,inner sep=0pt,minimum size=3pt,label=below:{$\beta_{p-1}$}] {}
 -- (3,0) node[circle,fill=black,inner sep=0pt,minimum size=3pt,label=below:{$\beta_p$}] {};
\end{tikzpicture}
\end{center}

Based on Equation \eqref{eq-HP} and the discussion that follows, if an irreducible unitary representation $\pi$ has nonzero cohomology, then $2\lambda$ must be regular and integral. For Levi subgroups of type $A$, we will see that this forces $J_{\mathcal{F}_{\ell}}(\nu_{\ell},\nu_{\ell})$ to be trivial, implying that $\pi$ is induced from unitary characters of type $A$. In summary, we will prove the following:
\begin{proposition} \label{cor-a}
Let $\pi = J(\lambda,-s\lambda) \in \widehat{G}$ be such that $2\lambda$ is regular integral (e.g. $\pi \in \widehat{G}^d$), and 
the highest weight of its lowest $K-$type $\eta = \{\lambda + s\lambda\}$ defines a Levi subgroup 
$M$ (c.f. \eqref{eq-m}) consisting only of Type $A$ simple factors. Then $\pi$ must be the lowest $K-$type subquotient of the unitarily induced module
$$\mathrm{Ind}_{M'N'}^{E_8}\left(\mathrm{triv}_A \otimes \mathbb{C}_{\tau'(\eta)} \otimes {\bf 1}\right)$$
for some Levi subgroup $M' \leq M$, and $\mathrm{triv}_A$ is the trivial representation of the Type $A$ simple factors in $M'$.
\end{proposition}
\begin{proof}
By \cite{V1}, the $(\mathcal{F}_{\ell} \cap K)-$types used to detect non-unitarity of $(\frf_{\ell}, \mathcal{F}_{\ell} \cap K)-$modules 
(non-unitarity certificates) have highest weights
\begin{equation} \label{eq-beta}
\begin{aligned}
&\sigma_1 := \beta_1 + \beta_2 +\ldots + \beta_{p-1} + \beta_p, \\ 
&\sigma_2 := \beta_1 + 2(\beta_2 +\ldots + \beta_{p-1}) + \beta_p,\\
&\sigma_3 := \beta_1 + 2\beta_2 + 3(\beta_3 + \ldots + \beta_{p-2}) +
2\beta_{p-1} + \beta_p, \\
&\cdots
\end{aligned}
\end{equation}  

For each Levi subgroup $M$ of $G$, we mark the nodes of $I(M)$ on the Dynkin diagram \eqref{eq-e8} by $\Delta.$ For example, if $I(M) = \{1,3,4,6,7\}$, one has:
\begin{center}
\begin{tikzpicture}
\draw
    (0,0) node[circle,inner sep=0pt,minimum size=3pt,label=below:{$\alpha_1$}] {$\Delta$}
 -- (1,0) node[circle,inner sep=0pt,minimum size=3pt,label=below:{$\alpha_3$}] {$\Delta$}
--   (2,0) node[circle,inner sep=0pt,minimum size=3pt,label=below:{$\alpha_4$}] {$\Delta$}
 -- (3,0) node[circle,fill=black,inner sep=0pt,minimum size=3pt,label=below:{$\alpha_5$}] {}
-- (4,0) node[circle,inner sep=0pt,minimum size=3pt,label=below:{$\alpha_6$}] {$\Delta$}
-- (5,0) node[circle,inner sep=0pt,minimum size=3pt,label=below:{$\alpha_7$}] {$\Delta$}
-- (6,0) node[circle,fill=black,inner sep=0pt,minimum size=3pt,label=below:{$\alpha_8$}] {};

\draw
 (2,0.7) node[circle,fill=black,inner sep=0pt,minimum size=3pt,label=above:{$\alpha_2$}] {}
--   (2,0) node {};
\end{tikzpicture}
\end{center}
Suppose all
  $\al_j\notin I(M)$ are solely connected to the endpoints of the diagram of
  $I(M)$ (the diagram above is one such example), the proof of Proposition
\ref{prop-bottomlayer} shows that the $K-$types with highest weights $\eta + \sigma_i$
are $M-$bottom layer for all $\sigma_i$ in \eqref{eq-beta}.
In other words, $\pi$ is unitary if and only if $\pi_{sph}$ in Step (i) of Section \ref{sec-strategy} is unitary in these cases.
Furthermore, by the main result of \cite{BDW}, $\pi \in \widehat{G}^d$
if and only if $\pi_{sph}$ is parabolically induced from
the trivial module. By induction in stages, this implies $\pi$ is a subquotient of a parabolically induced module 
from a unitary character. 

This reduces the
  proof to the cases when $M$ has a factor
$\mathcal{F}_{\ell}$ of Type $A_p$ such that the  nodes $\alpha_j$ with $j
\notin I(\mathcal{F}_{\ell})$ are only connected to nodes inner to the
diagram of the Type $A-$factor. The next list records these cases, and
for what values the $\mu+\sigma_j$ are bottom layer. The nodes in
$I(M)$ are marked $\Delta,$ the relevant nodes not in $I(M)$ are
marked $\times$. The remaining nodes are marked $\bullet$ and do not
figure in the arguments. The $k_j$ record the lowest value for which
the $K-$type is bottom layer.

\begin{center}
\begin{longtable}{|c|c|c|c|}
\hline
Type & $I(\mathcal{F}_{\ell})$ (in $\Delta$) and $j$ (in $\times$) &  Bottom layer $\sigma + \eta$\\
\hline \hline
(i) & 
\begin{tikzpicture}
\draw
    (0,-1) node[circle,inner sep=0pt,minimum size=3pt,label=below:{$\alpha_1$}] {$\Delta$}
 -- (1,-1) node[circle,inner sep=0pt,minimum size=3pt,label=below:{$\alpha_3$}] {$\Delta$}
--   (2,-1) node[circle,inner sep=0pt,minimum size=3pt,label=below:{$\alpha_4$}] {$\Delta$}
 -- (3,-1) node[circle,inner sep=0pt,minimum size=3pt,label=below:{$\alpha_5$}] {$\Delta$}
-- (4,-1) node[circle,inner sep=0pt,minimum size=3pt,label=below:{$\alpha_6$}] {$\Delta$}
-- (5,-1) node[circle,inner sep=0pt,minimum size=3pt,label=below:{$\alpha_7$}] {$\Delta$}
-- (6,-1) node[circle,inner sep=0pt,minimum size=3pt,label=below:{$\alpha_8$}] {$\Delta$};

\draw
 (2,-0.3) node[circle,inner sep=0pt,minimum size=3pt,label=above:{$\alpha_2$}] {$\times$}
--   (2,-1) node {};
\end{tikzpicture} &
$\begin{matrix} \sigma_1 + k_2\omega_2 &(k_2 \geq 1),\\
\sigma_2 + k_2\omega_2 &(k_2 \geq 2),\\
\sigma_3 + k_2\omega_2 &(k_2 \geq 3),\\
\sigma_4 + k_2\omega_2 &(k_2 \geq 3)
\\
\\
\\
\end{matrix}$
\\

\hline
(ii) & 
\begin{tikzpicture}
\draw
    (0,-1) node[circle,inner sep=0pt,minimum size=3pt,label=below:{$\alpha_1$}] {$\Delta$}
 -- (1,-1) node[circle,inner sep=0pt,minimum size=3pt,label=below:{$\alpha_3$}] {$\Delta$}
--   (2,-1) node[circle,inner sep=0pt,minimum size=3pt,label=below:{$\alpha_4$}] {$\Delta$}
 -- (3,-1) node[circle,inner sep=0pt,minimum size=3pt,label=below:{$\alpha_5$}] {$\Delta$}
-- (4,-1) node[circle,inner sep=0pt,minimum size=3pt,label=below:{$\alpha_6$}] {$\Delta$}
-- (5,-1) node[circle,inner sep=0pt,minimum size=3pt,label=below:{$\alpha_7$}] {$\Delta$}
-- (6,-1) node[circle,fill=black, inner sep=0pt,minimum size=3pt,label=below:{$\alpha_8$}] {};

\draw
 (2,-0.3) node[circle,inner sep=0pt,minimum size=3pt,label=above:{$\alpha_2$}] {$\times$}
--   (2,-1) node {};
\end{tikzpicture} &
$\begin{matrix} \sigma_1  + k_2\omega_2 &(k_2 \geq 1),\\
\sigma_2  + k_2\omega_2 &(k_2 \geq 2),\\
\sigma_3  + k_2\omega_2 &(k_2 \geq 3)
\\
\\
\\
\end{matrix}$

\\

\hline
(iii) & 
\begin{tikzpicture}
\draw
    (0,-1) node[circle,fill=black,inner sep=0pt,minimum size=3pt,label=below:{$\alpha_1$}] {}
 -- (1,-1) node[circle,inner sep=0pt,minimum size=3pt,label=below:{$\alpha_3$}] {$\Delta$}
--   (2,-1) node[circle,inner sep=0pt,minimum size=3pt,label=below:{$\alpha_4$}] {$\Delta$}
 -- (3,-1) node[circle,inner sep=0pt,minimum size=3pt,label=below:{$\alpha_5$}] {$\Delta$}
-- (4,-1) node[circle,inner sep=0pt,minimum size=3pt,label=below:{$\alpha_6$}] {$\Delta$}
-- (5,-1) node[circle,inner sep=0pt,minimum size=3pt,label=below:{$\alpha_7$}] {$\Delta$}
-- (6,-1) node[circle,inner sep=0pt,minimum size=3pt,label=below:{$\alpha_8$}] {$\Delta$};

\draw
 (2,-0.3) node[circle,inner sep=0pt,minimum size=3pt,label=above:{$\alpha_2$}] {$\times$}
--   (2,-1) node {};
\end{tikzpicture} &
$\begin{matrix}  \sigma_1  + k_2\omega_2 &(k_2 \geq 1),\\
\sigma_2  + k_2\omega_2 &(k_2 \geq 2),\\
\sigma_3 + k_2\omega_2 &(k_2 \geq 2)
\\
\\
\\
\end{matrix}$\\

\hline
(iii)' & 
\begin{tikzpicture}
\draw
    (0,-1) node[circle,fill=black,inner sep=0pt,minimum size=3pt,label=below:{$\alpha_1$}] {}
 -- (1,-1) node[circle,inner sep=0pt,minimum size=3pt,label=below:{$\alpha_3$}] {$\times$}
--   (2,-1) node[circle,inner sep=0pt,minimum size=3pt,label=below:{$\alpha_4$}] {$\Delta$}
 -- (3,-1) node[circle,inner sep=0pt,minimum size=3pt,label=below:{$\alpha_5$}] {$\Delta$}
-- (4,-1) node[circle,inner sep=0pt,minimum size=3pt,label=below:{$\alpha_6$}] {$\Delta$}
-- (5,-1) node[circle,inner sep=0pt,minimum size=3pt,label=below:{$\alpha_7$}] {$\Delta$}
-- (6,-1) node[circle,inner sep=0pt,minimum size=3pt,label=below:{$\alpha_8$}] {$\Delta$};

\draw
 (2,-0.3) node[circle,inner sep=0pt,minimum size=3pt,label=above:{$\alpha_2$}] {$\Delta$}
--   (2,-1) node {};
\end{tikzpicture} &
$\begin{matrix}\sigma_1  + k_3\omega_3 &(k_3 \geq 1),\\
\sigma_2  + k_3\omega_3 &(k_3 \geq 2),\\
\sigma_3  + k_3\omega_3 &(k_3 \geq 2)
\\
\\
\\
\end{matrix}$\\

\hline
(iv) & 
\begin{tikzpicture}
\draw
    (0,-1) node[circle,inner sep=0pt,minimum size=3pt,label=below:{$\alpha_1$}] {$\Delta$}
 -- (1,-1) node[circle,inner sep=0pt,minimum size=3pt,label=below:{$\alpha_3$}] {$\Delta$}
--   (2,-1) node[circle,inner sep=0pt,minimum size=3pt,label=below:{$\alpha_4$}] {$\Delta$}
 -- (3,-1) node[circle,inner sep=0pt,minimum size=3pt,label=below:{$\alpha_5$}] {$\Delta$}
-- (4,-1) node[circle,inner sep=0pt,minimum size=3pt,label=below:{$\alpha_6$}] {$\Delta$}
-- (5,-1) node[circle,fill=black,inner sep=0pt,minimum size=3pt,label=below:{$\alpha_7$}] {}
-- (6,-1) node[circle,fill=black, inner sep=0pt,minimum size=3pt,label=below:{$\alpha_8$}] {};

\draw
 (2,-0.3) node[circle,inner sep=0pt,minimum size=3pt,label=above:{$\alpha_2$}] {$\times$}
--   (2,-1) node {};
\end{tikzpicture} &
$\begin{matrix} \sigma_1  + k_2\omega_2 &(k_2 \geq 1),\\
\sigma_2  + k_2\omega_2 &(k_2 \geq 2),\\
\sigma_3  + k_2\omega_2 &(k_2 \geq 3)
\\
\\
\\
\end{matrix}$\\

\hline
(v) & 
\begin{tikzpicture}
\draw
    (0,-1) node[circle,fill=black,inner sep=0pt,minimum size=3pt,label=below:{$\alpha_1$}] {}
 -- (1,-1) node[circle,inner sep=0pt,minimum size=3pt,label=below:{$\alpha_3$}] {$\Delta$}
--   (2,-1) node[circle,inner sep=0pt,minimum size=3pt,label=below:{$\alpha_4$}] {$\Delta$}
 -- (3,-1) node[circle,inner sep=0pt,minimum size=3pt,label=below:{$\alpha_5$}] {$\Delta$}
-- (4,-1) node[circle,inner sep=0pt,minimum size=3pt,label=below:{$\alpha_6$}] {$\Delta$}
-- (5,-1) node[circle,inner sep=0pt,minimum size=3pt,label=below:{$\alpha_7$}] {$\Delta$}
-- (6,-1) node[circle,fill=black, inner sep=0pt,minimum size=3pt,label=below:{$\alpha_8$}] {};

\draw
 (2,-0.3) node[circle,inner sep=0pt,minimum size=3pt,label=above:{$\alpha_2$}] {$\times$}
--   (2,-1) node {};
\end{tikzpicture} &
$\begin{matrix} \sigma_1  + k_2\omega_2 &(k_2 \geq 1),\\
\sigma_2  + k_2\omega_2 &(k_2 \geq 2),\\
\sigma_3  + k_2\omega_2 &(k_2 \geq 2)
\\
\\
\\
\end{matrix}$\\
\hline

(v)' & 
\begin{tikzpicture}
\draw
    (0,-1) node[circle,fill=black,inner sep=0pt,minimum size=3pt,label=below:{$\alpha_1$}] {}
 -- (1,-1) node[circle,,inner sep=0pt,minimum size=3pt,label=below:{$\alpha_3$}] {$\times$}
--   (2,-1) node[circle,inner sep=0pt,minimum size=3pt,label=below:{$\alpha_4$}] {$\Delta$}
 -- (3,-1) node[circle,inner sep=0pt,minimum size=3pt,label=below:{$\alpha_5$}] {$\Delta$}
-- (4,-1) node[circle,inner sep=0pt,minimum size=3pt,label=below:{$\alpha_6$}] {$\Delta$}
-- (5,-1) node[circle,inner sep=0pt,minimum size=3pt,label=below:{$\alpha_7$}] {$\Delta$}
-- (6,-1) node[circle,fill=black, inner sep=0pt,minimum size=3pt,label=below:{$\alpha_8$}] {};

\draw
 (2,-0.3) node[circle,inner sep=0pt,minimum size=3pt,label=above:{$\alpha_2$}] {$\Delta$}
--   (2,-1) node {};
\end{tikzpicture} &
$\begin{matrix} \sigma_1  + k_2\omega_2 &(k_2 \geq 1),\\
\sigma_2  + k_2\omega_2 &(k_2 \geq 2),\\
\sigma_3  + k_2\omega_2 &(k_2 \geq 2)
\\
\\
\\
\end{matrix}$\\
\hline

(vi) & 
\begin{tikzpicture}
\draw
    (0,-1) node[circle,inner sep=0pt,minimum size=3pt,label=below:{$\alpha_1$}] {$\Delta$}
 -- (1,-1) node[circle,,inner sep=0pt,minimum size=3pt,label=below:{$\alpha_3$}] {$\Delta$}
--   (2,-1) node[circle,inner sep=0pt,minimum size=3pt,label=below:{$\alpha_4$}] {$\Delta$}
 -- (3,-1) node[circle,inner sep=0pt,minimum size=3pt,label=below:{$\alpha_5$}] {$\Delta$}
-- (4,-1) node[circle,fill=black,inner sep=0pt,minimum size=3pt,label=below:{$\alpha_6$}] {}
-- (5,-1) node[circle,fill=black,inner sep=0pt,minimum size=3pt,label=below:{$\alpha_7$}] {}
-- (6,-1) node[circle,fill=black, inner sep=0pt,minimum size=3pt,label=below:{$\alpha_8$}] {};

\draw
 (2,-0.3) node[circle,inner sep=0pt,minimum size=3pt,label=above:{$\alpha_2$}] {$\times$}
--   (2,-1) node {};
\end{tikzpicture} &
$\begin{matrix} \sigma_1  + k_2\omega_2 &(k_2 \geq 1),\\
\sigma_2  + k_2\omega_2 &(k_2 \geq 2)
\\
\\
\\
\end{matrix}$\\
\hline

(vi)' & 
\begin{tikzpicture}
\draw
    (0,-1) node[circle,inner sep=0pt,minimum size=3pt,label=below:{$\alpha_1$}] {$\Delta$}
 -- (1,-1) node[circle,,inner sep=0pt,minimum size=3pt,label=below:{$\alpha_3$}] {$\Delta$}
--   (2,-1) node[circle,inner sep=0pt,minimum size=3pt,label=below:{$\alpha_4$}] {$\Delta$}
 -- (3,-1) node[circle,inner sep=0pt,minimum size=3pt,label=below:{$\alpha_5$}] {$\times$}
-- (4,-1) node[circle,fill=black,inner sep=0pt,minimum size=3pt,label=below:{$\alpha_6$}] {}
-- (5,-1) node[circle,fill=black,inner sep=0pt,minimum size=3pt,label=below:{$\alpha_7$}] {}
-- (6,-1) node[circle,fill=black, inner sep=0pt,minimum size=3pt,label=below:{$\alpha_8$}] {};

\draw
 (2,-0.3) node[circle,inner sep=0pt,minimum size=3pt,label=above:{$\alpha_2$}] {$\Delta$}
--   (2,-1) node {};
\end{tikzpicture} &
$\begin{matrix} \sigma_1  + k_5\omega_5 &(k_5 \geq 1),\\
\sigma_2  + k_5\omega_5 &(k_5 \geq 2)
\\
\\
\\
\end{matrix}$\\
\hline

(vii) & 
\begin{tikzpicture}
\draw
    (0,-1) node[circle,fill=black,inner sep=0pt,minimum size=3pt,label=below:{$\alpha_1$}] {}
 -- (1,-1) node[circle,,inner sep=0pt,minimum size=3pt,label=below:{$\alpha_3$}] {$\Delta$}
--   (2,-1) node[circle,inner sep=0pt,minimum size=3pt,label=below:{$\alpha_4$}] {$\Delta$}
 -- (3,-1) node[circle,inner sep=0pt,minimum size=3pt,label=below:{$\alpha_5$}] {$\Delta$}
-- (4,-1) node[circle,inner sep=0pt,minimum size=3pt,label=below:{$\alpha_6$}] {$\Delta$}
-- (5,-1) node[circle,fill=black,inner sep=0pt,minimum size=3pt,label=below:{$\alpha_7$}] {}
-- (6,-1) node[circle,fill=black, inner sep=0pt,minimum size=3pt,label=below:{$\alpha_8$}] {};

\draw
 (2,-0.3) node[circle,inner sep=0pt,minimum size=3pt,label=above:{$\alpha_2$}] {$\times$}
--   (2,-1) node {};
\end{tikzpicture} &
$\begin{matrix} \sigma_1  + k_2\omega_2 &(k_2 \geq 1),\\
\sigma_2  + k_2\omega_2 &(k_2 \geq 2)
\\
\\
\\
\end{matrix}$\\
\hline

(vii)' & 
\begin{tikzpicture}
\draw
    (0,-1) node[circle,fill=black,inner sep=0pt,minimum size=3pt,label=below:{$\alpha_1$}] {}
 -- (1,-1) node[circle,,inner sep=0pt,minimum size=3pt,label=below:{$\alpha_3$}] {$\times$}
--   (2,-1) node[circle,inner sep=0pt,minimum size=3pt,label=below:{$\alpha_4$}] {$\Delta$}
 -- (3,-1) node[circle,inner sep=0pt,minimum size=3pt,label=below:{$\alpha_5$}] {$\Delta$}
-- (4,-1) node[circle,inner sep=0pt,minimum size=3pt,label=below:{$\alpha_6$}] {$\Delta$}
-- (5,-1) node[circle,fill=black,inner sep=0pt,minimum size=3pt,label=below:{$\alpha_7$}] {}
-- (6,-1) node[circle,fill=black, inner sep=0pt,minimum size=3pt,label=below:{$\alpha_8$}] {};

\draw
 (2,-0.3) node[circle,inner sep=0pt,minimum size=3pt,label=above:{$\alpha_2$}] {$\Delta$}
--   (2,-1) node {};
\end{tikzpicture} &
$\begin{matrix} \sigma_1  + k_3\omega_3 &(k_3 \geq 1),\\
\sigma_2  + k_3\omega_3 &(k_3 \geq 2)
\\
\\
\\
\end{matrix}$\\
\hline

(viii) & 
\begin{tikzpicture}
\draw
    (0,-1) node[circle,fill=black,inner sep=0pt,minimum size=3pt,label=below:{$\alpha_1$}] {}
 -- (1,-1) node[circle,,inner sep=0pt,minimum size=3pt,label=below:{$\alpha_3$}] {$\Delta$}
--   (2,-1) node[circle,inner sep=0pt,minimum size=3pt,label=below:{$\alpha_4$}] {$\Delta$}
 -- (3,-1) node[circle,inner sep=0pt,minimum size=3pt,label=below:{$\alpha_5$}] {$\times$}
-- (4,-1) node[circle,fill=black,inner sep=0pt,minimum size=3pt,label=below:{$\alpha_6$}] {}
-- (5,-1) node[circle,fill=black,inner sep=0pt,minimum size=3pt,label=below:{$\alpha_7$}] {}
-- (6,-1) node[circle,fill=black, inner sep=0pt,minimum size=3pt,label=below:{$\alpha_8$}] {};

\draw
 (2,-0.3) node[circle,inner sep=0pt,minimum size=3pt,label=above:{$\alpha_2$}] {$\Delta$}
--   (2,-1) node {};
\end{tikzpicture} &
$\begin{matrix} \sigma_1  + k_5\omega_2 &(k_5 \geq 1),\\
\sigma_2  + k_5\omega_2 &(k_5 \geq 2)
\\
\\
\\
\end{matrix}$\\
\hline

(viii)' & 
\begin{tikzpicture}
\draw
    (0,-1) node[circle,fill=black,inner sep=0pt,minimum size=3pt,label=below:{$\alpha_1$}] {}
 -- (1,-1) node[circle,,inner sep=0pt,minimum size=3pt,label=below:{$\alpha_3$}] {$\Delta$}
--   (2,-1) node[circle,inner sep=0pt,minimum size=3pt,label=below:{$\alpha_4$}] {$\Delta$}
 -- (3,-1) node[circle,inner sep=0pt,minimum size=3pt,label=below:{$\alpha_5$}] {$\Delta$}
-- (4,-1) node[circle,fill=black,inner sep=0pt,minimum size=3pt,label=below:{$\alpha_6$}] {}
-- (5,-1) node[circle,fill=black,inner sep=0pt,minimum size=3pt,label=below:{$\alpha_7$}] {}
-- (6,-1) node[circle,fill=black, inner sep=0pt,minimum size=3pt,label=below:{$\alpha_8$}] {};

\draw
 (2,-0.3) node[circle,inner sep=0pt,minimum size=3pt,label=above:{$\alpha_2$}] {$\times$}
--   (2,-1) node {};
\end{tikzpicture} &
$\begin{matrix}\sigma_1  + k_2\omega_2 &(k_2 \geq 1),\\
\sigma_2  + k_2\omega_2 &(k_2 \geq 2)
\\
\\
\\
\end{matrix}$\\
\hline

(viii)'' & 
\begin{tikzpicture}
\draw
    (0,-1) node[circle,fill=black,inner sep=0pt,minimum size=3pt,label=below:{$\alpha_1$}] {}
 -- (1,-1) node[circle,,inner sep=0pt,minimum size=3pt,label=below:{$\alpha_3$}] {$\times$}
--   (2,-1) node[circle,inner sep=0pt,minimum size=3pt,label=below:{$\alpha_4$}] {$\Delta$}
 -- (3,-1) node[circle,inner sep=0pt,minimum size=3pt,label=below:{$\alpha_5$}] {$\Delta$}
-- (4,-1) node[circle,fill=black,inner sep=0pt,minimum size=3pt,label=below:{$\alpha_6$}] {}
-- (5,-1) node[circle,fill=black,inner sep=0pt,minimum size=3pt,label=below:{$\alpha_7$}] {}
-- (6,-1) node[circle,fill=black, inner sep=0pt,minimum size=3pt,label=below:{$\alpha_8$}] {};

\draw
 (2,-0.3) node[circle,inner sep=0pt,minimum size=3pt,label=above:{$\alpha_2$}] {$\Delta$}
--   (2,-1) node {};
\end{tikzpicture} &
$\begin{matrix} \sigma_1  + k_3\omega_3 &(k_3 \geq 1),\\
\sigma_2 + k_3\omega_3 &(k_3 \geq 2)
\\
\\
\\
\end{matrix}$\\
\hline

\end{longtable}
\end{center}
In all cases in the table, when a $K-$type in the right column is not bottom layer, then (by linear algebra) the $2\lambda_L-$value of the infinitesimal character of the corresponding representation is not regular
  integral. Therefore, these representations cannot appear in $\widehat{G}^d$ by \eqref{eq-HP}.   
  For the rest of this section, we illustrate some examples of checking non-regularity or non-integrality of $2\lambda$.

\medskip
Note that if $J_{\mathcal{F}_{\ell}}(\nu_{\ell},\nu_{\ell})$ has indefinite form in the adjoint representation
$V_{\frf_{\ell} \cap \frk}(\sigma_1)$, then $\sigma_1 + \eta$ is always $M-$bottom layer
for all lowest $K-$types $\eta$. Therefore, we are left to study spherical representations $J_{\mathcal{F}_{\ell}}(\nu_{\ell},\nu_{\ell})$ of Type $A_p$ 
($3 \leq p \leq 7$) such that $2\nu_{\ell}$ is regular integral, and
the Hermitian form on $J_{\mathcal{F}_{\ell}}(\nu_{\ell},\nu_{\ell})$ is positive definite on the adjoint representation. 

By the classification of the unitary dual of $GL(n,\mathbb{C})$ in \cite{V1}, this implies $2\nu_{\ell}$ is obtained by concatenating the following string of integers (in the usual $\mathfrak{sl}(p+1)$ coordinates):
\begin{align*}
&(7,5,3,1,-1,-3,-5,-7),\quad (6,4,2,0,-2,-4,-6),\quad (5,3,1,-1,-3,-5),\\
& (4,2,0,-2,-4),\quad (3,1,-1,-3),\quad (2,0,-2), \quad (1,-1) ,\quad (0);\end{align*}
corresponding to the trivial representation, and/or
\begin{equation} \label{eq-comp2}
\begin{aligned}
&\left(\frac{7}{2},\frac{3}{2},-\frac{1}{2},-\frac{5}{2} \ ;\   \frac{5}{2}, \frac{1}{2}, -\frac{3}{2}, -\frac{7}{2}\right), \left(\frac{5}{2},\frac{1}{2},-\frac{3}{2} \ ;\   \frac{3}{2},-\frac{1}{2},-\frac{5}{2}\right), 
\left(\frac{3}{2},-\frac{1}{2} \ ;\   \frac{1}{2},-\frac{3}{2}\right), \left(\frac{1}{2} \ ;\   -\frac{1}{2}\right)
\end{aligned}
\end{equation}
corresponding to the mid-point of Stein's complementary series (whose Dirac cohomology is zero), and/or the following strings:
\begin{equation} \label{eq-comp}
\begin{aligned}
&m=4:\ \left(\frac{9}{2},\frac{5}{2},\frac{1}{2},-\frac{3}{2} \ ;\   \frac{3}{2}, -\frac{1}{2}, -\frac{5}{2}, -\frac{9}{2}\right),\\
&m=3:\ \left(\frac{11}{2},\frac{7}{2},\frac{3}{2},-\frac{1}{2} \ ;\   \frac{1}{2}, -\frac{3}{2}, -\frac{7}{2}, -\frac{11}{2}\right), \left(\frac{7}{2},\frac{1}{2},-\frac{3}{2} \ ;\   \frac{3}{2},-\frac{1}{2},-\frac{5}{2}\right)\\
&m=2:\ \left(\frac{13}{2},\frac{9}{2},\frac{5}{2},\frac{1}{2} \ ;\   -\frac{1}{2}, -\frac{5}{2}, -\frac{9}{2},-\frac{1}{2}\right), \left(\frac{9}{2},\frac{5}{2},\frac{1}{2} \ ;\   -\frac{1}{2},-\frac{5}{2},-\frac{9}{2}\right), \left(\frac{5}{2},\frac{1}{2} \ ;\   -\frac{1}{2},-\frac{5}{2}\right)
\end{aligned}
\end{equation}
corresponding to a non-unitary representation whose first occurrence of indefinite forms is on $V_{\frf_{\ell} \cap \frk}(\sigma_m)$.

\medskip
The requirement that $2\nu_{\ell}$ is regular integral implies that
when $p$ is even, the only choice of $2\nu_{\ell}$ is $(p, p-2, \dots, -p+2, -p)$, i.e. $J_{\mathcal{F}_{\ell}}(\nu_{\ell},\nu_{\ell})$ is the trivial representation. When $p$ is odd,  the regularity 
condition rules out all but  the trivial representation and
parameters \eqref{eq-comp2} and \eqref{eq-comp}.  

\begin{example} \label{eg-a5}
\begin{itemize}
\item[(a)] In Case (i), $M$ consists of a single simple factor $\mathcal{F}_{\ell}$ of Type $A_7$. By the last column of the table and the above discussions, one only needs to consider the case of $J_{A_7}(\nu,\nu)$ with
$$2\nu = \left(\frac{7}{2},\frac{3}{2},-\frac{1}{2},-\frac{5}{2} \ ;\   \frac{5}{2}, \frac{1}{2}, -\frac{3}{2}, -\frac{7}{2}\right) \sim [1,1,1,1,1,1,1]$$
which gives a unitary module for $\eta = k_2\omega_2$ for all $k_2 \geq 1$, or
\begin{equation*}
2\nu = \begin{cases} 
\left(\frac{9}{2},\frac{5}{2},\frac{1}{2},-\frac{3}{2} \ ;\   \frac{3}{2}, -\frac{1}{2}, -\frac{5}{2}, -\frac{9}{2}\right) \sim [2,2,2,1,2,2,2] &\text{and}\quad \eta = \omega_2\\ 
\left(\frac{11}{2},\frac{7}{2},\frac{3}{2},-\frac{1}{2} \ ;\   \frac{1}{2}, -\frac{3}{2}, -\frac{7}{2}, -\frac{11}{2}\right)\sim [2,2,1,1,1,2,2] &\text{and}\quad \eta = \omega_2, 2\omega_2\\ 
\left(\frac{13}{2},\frac{9}{2},\frac{5}{2},\frac{1}{2} \ ;\   -\frac{1}{2}, -\frac{5}{2}, -\frac{9}{2},-\frac{1}{2}\right) \sim [2,1,1,1,1,1,2] &\text{and}\quad \eta = \omega_2, 2\omega_2, 3\omega_2
\end{cases}
\end{equation*}
where bottom layer $K$-type argument fails to detect non-unitarity. 

Consider the induced representation in Step (i) of Section \ref{sec-strategy}:
\begin{equation*} \label{eq-a7}
\mathrm{Ind}_{MN}^{E_8}\left(J_{A_7}(\nu,\nu) \otimes \mathbb{C}_{\tau(\eta)} \otimes {\bf 1} \right)
\end{equation*}
The infinitesimal character of the above induced module (and $\pi$) is equal to $(\la_L,\la_R)$, where
$$2\la_L = 2\widetilde{\nu} + \eta$$
where $2\widetilde{\nu} = \begin{cases} 
[1,x,1,1,1,1,1,1]\\
[2,x,2,2,1,2,2,2]\\ 
[2,x,2,1,1,1,2,2]\\ 
[2,x,1,1,1,1,1,2]
\end{cases}$ satisfying $\langle \widetilde{\nu}, \eta \rangle = 0.$
By solving $x$, one has:
$$2\la_L  = \begin{cases}
\left[1,k_2-\frac{15}{2},1,1,1,1,1,1\right] & k_2 \geq 1\\
\left[2,k_2-\frac{27}{2},2,2,1,2,2,2\right] & k_2 = 1\\
\left[2,k_2-\frac{21}{2},2,1,1,1,2,2\right] & k_2 = 1,2\\
\left[2,k_2-\frac{17}{2},1,1,1,1,1,2\right] & k_2 = 1,2,3
\end{cases}
$$
for $\eta = k_2\omega_2$. Note that none satisfies \eqref{eq-HP}. Consequently, the only possible $J_{A_7}(\nu,\nu) =\mathrm{triv}_{A_7}$ is the trivial representation.

\medskip

\item[(b)] Consider Case (v), where $M$ has a factor of Type $A_5$ and $\eta = k_1\omega_1 + k_2\omega_2 + k_8\omega_8$.
As discussed above, we only need to study $J_{A_5}(\nu,\nu)$ with:
\begin{equation*}
2\nu = \begin{cases}
\left(\frac{5}{2},\frac{1}{2},-\frac{3}{2} \ ;\   \frac{3}{2},-\frac{1}{2},-\frac{5}{2}\right) \sim [1,1,1,1,1] &\text{and}\quad \eta = k_1\omega_1+k_2\omega_2+k_8\omega_8\\
\left(\frac{7}{2},\frac{3}{2},-\frac{1}{2} \ ;\   \frac{1}{2},-\frac{3}{2},-\frac{7}{2}\right) \sim [2,1,1,1,2] &\text{and}\quad \eta = \omega_1+k_2\omega_2+k_8\omega_8\\
\left(\frac{9}{2},\frac{5}{2},\frac{1}{2} \ ;\   -\frac{1}{2},-\frac{5}{2},-\frac{9}{2}\right) \sim [2,2,1,2,2] &\text{and}\quad \eta = \omega_1+k_2\omega_2+k_8\omega_8
\end{cases}
\end{equation*}
More explicitly, the first module is unitary, and the non-unitarity of the two modules are detected by $V_{\frs\fru(6)}(\sigma_2)$ and $V_{\frs\fru(6)}(\sigma_3)$ respectively. 

\medskip
By the same arguments as in (a), let $(\lambda_L,\lambda_R)$ be the infinitesimal character  of the induced module
$$\mathrm{Ind}_{MN}^{E_8}\left( J_{A_5}(\nu;\nu) \otimes \mathbb{C}_{\tau(\eta)} \otimes {\bf 1} \right)$$ along with its lowest $K-$type subquotient $\pi$. Then  $2\la_L$ is equal to:
$$2\la_L  = 2\widetilde{\nu} + \eta = \begin{cases}
\left[k_1-\frac{5}{2},k_2-4,1,1,1,1,1,k_8-\frac{5}{2}\right]\\
\left[k_1-\frac{7}{2},-4,2,1,1,1,2,k_8-\frac{7}{2}\right]\\
\left[k_1-\frac{9}{2},-6,2,2,1,2,2,k_8-\frac{9}{2}\right] 
\end{cases},$$
none of which is integral. Therefore, all of them must have zero Dirac cohomology, and the only possible $J_{A_5}(\nu;\nu)  =\mathrm{triv}_{A_5}$ having nonzero Dirac cohomology is the trivial module. \end{itemize} \end{example}

In general, the Levi subgroup may have more than one simple factor of Type $A$. For instance, consider $I(\widetilde{M}) = \{1,2,4,5,6,7\}$  and 
$$\widetilde{M} = \mathcal{F} \times \mathcal{F}' \times (\mathbb{C}^*)^2,$$ 
where $\mathcal{F}$ is of Type $A_1$ corresponding to $\alpha_1$, and $\mathcal{F}'$ is of Type $A_5$ corresponding to the nodes $\alpha_2, \alpha_4, \alpha_5,\alpha_6, \alpha_7$. Consider the subquotient of the induced module
\begin{equation*}
\mathrm{Ind}_{\widetilde{M}\widetilde{N}}^{E_8}\left(J_{A_1}(\nu,\nu) \otimes J_{A_5}(\nu',\nu') \otimes \mathbb{C}_{\widetilde{\tau}(\eta)} \otimes {\bf 1} \right),
\end{equation*}
where $J_{A_1}(\nu,\nu)$, $J_{A_5}(\nu',\nu')$ with $\eta = k_3\omega_3+k_8\omega_8$. By arguments similar to that of Example \ref{eg-a5}(b), the infinitesimal character of the above induced module does not satisfy \eqref{eq-HP}
for all possible $k_3, k_8 \geq 1$ unless both of them are trivial modules.

\bigskip
In conclusion, if $M$ consists only of Type $A$ factors, the spherical factors $J_A(\nu,\nu)$ must be trivial modules. Otherwise, the parameter $2\lambda$ in $\pi = J(\lambda,-s\lambda)$ would fail to be integral, violating Equation \eqref{eq-HP}. This completes the proof of Proposition \ref{cor-a}.\end{proof}

We end this section by the following remark:
\begin{remark} \label{rmk-other}
The above arguments hold for any Levi subgroup $M$ with  a Type $A$ factor $\mathcal{F}_{\ell}$ such
that when any $j \notin I(M)$ is connected to a node of 
$\mathcal{F}_{\ell}$, it is connected to an endpoint of the diagram. For instance, if $I(M) = \{2,3,4,5,7,8\}$, then $M$ has two
simple factors, one of Type $D_4$ and another of Type $A_2$. Then the module on Type $A_2$ factor is trivial.  
\end{remark}

\section{Type $D$ Levi subgroups}
We now deal with the case when $M$ contains a Type $D$ factor
$\mathcal{F}$ of rank 4,5,6 and 7. Fix the simple roots of $\mathcal{F}$ by:
\begin{center}
\begin{tikzpicture}
\draw
    (-1,0) node[circle,fill=black,inner sep=0pt,minimum size=3pt,label=below:{$\beta_1$}] {}
 -- (0,0) node[circle,fill=black,inner sep=0pt,minimum size=3pt,label=below:{$\beta_2$}] {};
 
\draw (0.5,0) node {$\cdots$};

\draw
    (1,0) node[circle,fill=black,inner sep=0pt,minimum size=3pt,label=below:{$\beta_{p-3}$}] {}
 -- (2,0) node {};

\draw
 (2,0.7) node[circle,fill=black,inner sep=0pt,minimum size=3pt,label=above:{$\beta_{p-1}$}] {}
--  (2,0) node[circle,fill=black,inner sep=0pt,minimum size=3pt,label=below:{$\beta_{p-2}$}] {}
 -- (3,0) node[circle,fill=black,inner sep=0pt,minimum size=3pt,label=below:{$\beta_p$}] {};
\end{tikzpicture}
\end{center}

Here is the main result of this section:
\begin{proposition} \label{cor-d}
Let $\pi = J(\lambda,-s\lambda) \in \widehat{G}$ be such that $2\lambda$ is regular integral (e.g. $\pi \in \widehat{G}^d$), and 
the highest weight of its lowest $K-$type $\eta = \{\lambda + s\lambda\}$ defines a Levi subgroup 
$M$ (c.f. \eqref{eq-m}) consisting only of a Type $D$ factor and possibly a Type $A$ factor. Then $\pi$ must be the lowest $K-$type subquotient of the unitarily induced module
$$\mathrm{Ind}_{M'N'}^{E_8}\left(\pi_D^{unip,d} \otimes \mathrm{triv}_A \otimes \mathbb{C}_{\tau'(\eta)} \otimes {\bf 1}\right),$$
where $M'$ is a Levi subgroup consisting of a Type $D$ simple factor and possibly a Type $A$ simple factor, $\pi_D^{unip,d}$
is a unipotent representation of the Type $D$ factor with nonzero Dirac cohomology, and $\mathrm{triv}_A$ is the trivial representation
of the Type $A$ factor.
\end{proposition}
\begin{proof}
By Remark \ref{rmk-other}, if there is another simple factor in $M$ (which is necessarily of Type $A$), the spherical representation 
corresponding to this Type $A$ factor must be the trivial module. Therefore, one only needs to focus  on
the spherical representation corresponding to $\mathcal{F}$, and the proof follows the pattern of the previous section. 

By \cite[Section 6]{BDW}, the non-unitarity certificates of $(\frf, \mathcal{F} \cap K)-$modules with half-integral
regular infinitesimal character have highest weights:
\begin{itemize}
\item Type $D_7$:
\begin{align*}
&\sigma^7_0 := 2\beta_1 + 2\beta_2 + 2\beta_3 + 2\beta_4 + 2\beta_5 + \beta_6 + \beta_7, \\
&\sigma^7_1 := \beta_1 + 2\beta_2 + 2\beta_3 + 2\beta_4 + 2\beta_5 + \beta_6 + \beta_7, \\
&\sigma^7_2 := \beta_1 + 2\beta_2 + 3\beta_3 + 4\beta_4 + 4\beta_5 + 2\beta_6 + 2\beta_7,\\
&\sigma^7_3 := \beta_1 + 2\beta_2 + 3\beta_3 + 4\beta_4 + 5\beta_5 + 3\beta_6 + 3\beta_7
\end{align*}
\item Type $D_6$:
\begin{align*}
&\sigma^6_0 := 2\beta_1 + 2\beta_2 + 2\beta_3 + 2\beta_4 + \beta_5 + \beta_6, \\
&\sigma^6_1 := \beta_1 + 2\beta_2 + 2\beta_3 + 2\beta_4 + \beta_5 + \beta_6, \\
&\sigma^6_2 := \beta_1 + 2\beta_2 + 3\beta_3 + 4\beta_4 + 2\beta_5 + 2\beta_6,\\
&\sigma^6_3 := \beta_1 + 2\beta_2 + 3\beta_3 + 4\beta_4 + 3\beta_5 + 2\beta_6 
\end{align*}
\item Type $D_5$:
\begin{align*}
&\sigma^5_0 := 2\beta_1 + 2\beta_2 + 2\beta_3 + \beta_4 + \beta_5, \\
&\sigma^5_1 := \beta_1 + 2\beta_2 + 2\beta_3 + \beta_4 + \beta_5, \\
&\sigma^5_2 := \beta_1 + 2\beta_2 + 3\beta_3 + 2\beta_4 + 2\beta_5
\end{align*}
\item Type $D_4$:
\begin{align*}
&\sigma^4_0 := 2\beta_1 + 2\beta_2 + \beta_3 + \beta_4, \\
&\sigma^4_1 := \beta_1 + 2\beta_2 + \beta_3 + \beta_4, \\
&\sigma^4_2 := \beta_1 + 2\beta_2 + 2\beta_3 + \beta_4
\end{align*}
\end{itemize}

As in the previous section, we list the possibilities of the Type $D_p$ factor and the possibilities
of $\eta$ so that $\sigma + \eta$ is bottom layer:
\begin{center}
\begin{longtable}{|c|c|c|c|}
\hline
Type & $I(F)$ (in $\Delta$) and $j$ (in $\times$) &  Bottom layer $\sigma + \eta$\\
\hline \hline
$D_7$ & 
\begin{tikzpicture}
\draw
    (0,-1) node[circle,inner sep=0pt,minimum size=3pt,label=below:{$\alpha_1$}] {$\times$}
 -- (1,-1) node[circle,inner sep=0pt,minimum size=3pt,label=below:{$\alpha_3$}] {$\Delta$}
--   (2,-1) node[circle,inner sep=0pt,minimum size=3pt,label=below:{$\alpha_4$}] {$\Delta$}
 -- (3,-1) node[circle,inner sep=0pt,minimum size=3pt,label=below:{$\alpha_5$}] {$\Delta$}
-- (4,-1) node[circle,inner sep=0pt,minimum size=3pt,label=below:{$\alpha_6$}] {$\Delta$}
-- (5,-1) node[circle,inner sep=0pt,minimum size=3pt,label=below:{$\alpha_7$}] {$\Delta$}
-- (6,-1) node[circle,inner sep=0pt,minimum size=3pt,label=below:{$\alpha_8$}] {$\Delta$};

\draw
 (2,-0.3) node[circle,inner sep=0pt,minimum size=3pt,label=above:{$\alpha_2$}] {$\Delta$}
--   (2,-1) node {};
\end{tikzpicture} &
$\begin{matrix} \sigma^7_0 + k_1\omega_1 &(k_1 \geq 1),\\
\sigma^7_1 + k_1\omega_1 &(k_1 \geq 1),\\
\sigma^7_2 + k_1\omega_1 &(k_1 \geq 2),\\
\sigma^7_3 + k_1\omega_1 &(k_1 \geq 3)
\\
\\
\\
\end{matrix}$
\\

\hline
$D_6$ & 
\begin{tikzpicture}
\draw
    (0,-1) node[circle,inner sep=0pt,minimum size=3pt,label=below:{$\alpha_1$}] {$\times$}
 -- (1,-1) node[circle,inner sep=0pt,minimum size=3pt,label=below:{$\alpha_3$}] {$\Delta$}
--   (2,-1) node[circle,inner sep=0pt,minimum size=3pt,label=below:{$\alpha_4$}] {$\Delta$}
 -- (3,-1) node[circle,inner sep=0pt,minimum size=3pt,label=below:{$\alpha_5$}] {$\Delta$}
-- (4,-1) node[circle,inner sep=0pt,minimum size=3pt,label=below:{$\alpha_6$}] {$\Delta$}
-- (5,-1) node[circle,inner sep=0pt,minimum size=3pt,label=below:{$\alpha_7$}] {$\Delta$}
-- (6,-1) node[circle,inner sep=0pt,minimum size=3pt,label=below:{$\alpha_8$}] {$\times$};

\draw
 (2,-0.3) node[circle,inner sep=0pt,minimum size=3pt,label=above:{$\alpha_2$}] {$\Delta$}
--   (2,-1) node {};
\end{tikzpicture} &
$\begin{matrix} \sigma^6_0  + k_1\omega_1 + k_8\omega_8 &(k_1 \geq 1, k_8 \geq 2),\\
\sigma^6_1  + k_1\omega_1 + k_8\omega_8 &(k_1, k_8 \geq 1),\\
\sigma^6_2  + k_1\omega_1 + k_8\omega_8 &(k_1 \geq 2, k_8 \geq 1)\\
\sigma^6_3  + k_1\omega_1 + k_8\omega_8 &(k_1 \geq 3, k_8 \geq 1)
\\
\\
\\
\end{matrix}$\\
\hline

$D_5$ & 
\begin{tikzpicture}
\draw
    (0,-1) node[circle,inner sep=0pt,minimum size=3pt,label=below:{$\alpha_1$}] {$\times$}
 -- (1,-1) node[circle,inner sep=0pt,minimum size=3pt,label=below:{$\alpha_3$}] {$\Delta$}
--   (2,-1) node[circle,inner sep=0pt,minimum size=3pt,label=below:{$\alpha_4$}] {$\Delta$}
 -- (3,-1) node[circle,inner sep=0pt,minimum size=3pt,label=below:{$\alpha_5$}] {$\Delta$}
-- (4,-1) node[circle,inner sep=0pt,minimum size=3pt,label=below:{$\alpha_6$}] {$\Delta$}
-- (5,-1) node[circle,inner sep=0pt,minimum size=3pt,label=below:{$\alpha_7$}] {$\times$}
-- (6,-1) node[circle,fill=black,inner sep=0pt,minimum size=3pt,label=below:{$\alpha_8$}] {};

\draw
 (2,-0.3) node[circle,inner sep=0pt,minimum size=3pt,label=above:{$\alpha_2$}] {$\Delta$}
--   (2,-1) node {};
\end{tikzpicture} &
$\begin{matrix} \sigma^5_0  + k_1\omega_1 + k_7\omega_7 &(k_1 \geq 1, k_7 \geq 2),\\
\sigma^5_1  + k_1\omega_1 + k_7\omega_7 &(k_1, k_7 \geq 1),\\
\sigma^5_2  + k_1\omega_1 + k_7\omega_7 &(k_1 \geq 2, k_7 \geq 1)\\
\\
\\
\\
\end{matrix}$\\
\hline

$D_5'$ & 
\begin{tikzpicture}
\draw
    (0,-1) node[circle,inner sep=0pt,minimum size=3pt,label=below:{$\alpha_1$}] {$\Delta$}
 -- (1,-1) node[circle,inner sep=0pt,minimum size=3pt,label=below:{$\alpha_3$}] {$\Delta$}
--   (2,-1) node[circle,inner sep=0pt,minimum size=3pt,label=below:{$\alpha_4$}] {$\Delta$}
 -- (3,-1) node[circle,inner sep=0pt,minimum size=3pt,label=below:{$\alpha_5$}] {$\Delta$}
-- (4,-1) node[circle,inner sep=0pt,minimum size=3pt,label=below:{$\alpha_6$}] {$\times$}
-- (5,-1) node[circle,fill=black, inner sep=0pt,minimum size=3pt,label=below:{$\alpha_7$}] {}
-- (6,-1) node[circle,fill=black, inner sep=0pt,minimum size=3pt,label=below:{$\alpha_8$}] {};

\draw
 (2,-0.3) node[circle,inner sep=0pt,minimum size=3pt,label=above:{$\alpha_2$}] {$\Delta$}
--   (2,-1) node {};
\end{tikzpicture} &
$\begin{matrix} \sigma^5_0  + k_6\omega_6 &(k_6 \geq 1),\\
\sigma^5_1  + k_6\omega_6 &(k_6 \geq 1),\\
\sigma^5_2  + k_6\omega_6 &(k_6 \geq 2)
\\
\\
\\
\end{matrix}$\\
\hline

$D_4$ & 
\begin{tikzpicture}
\draw
    (0,-1) node[circle,inner sep=0pt,minimum size=3pt,label=below:{$\alpha_1$}] {$\times$}
 -- (1,-1) node[circle,inner sep=0pt,minimum size=3pt,label=below:{$\alpha_3$}] {$\Delta$}
--   (2,-1) node[circle,inner sep=0pt,minimum size=3pt,label=below:{$\alpha_4$}] {$\Delta$}
 -- (3,-1) node[circle,inner sep=0pt,minimum size=3pt,label=below:{$\alpha_5$}] {$\Delta$}
-- (4,-1) node[circle,inner sep=0pt,minimum size=3pt,label=below:{$\alpha_6$}] {$\times$}
-- (5,-1) node[circle,fill=black, inner sep=0pt,minimum size=3pt,label=below:{$\alpha_7$}] {}
-- (6,-1) node[circle,fill=black, inner sep=0pt,minimum size=3pt,label=below:{$\alpha_8$}] {};

\draw
 (2,-0.3) node[circle,inner sep=0pt,minimum size=3pt,label=above:{$\alpha_2$}] {$\Delta$}
--   (2,-1) node {};
\end{tikzpicture} &
$\begin{matrix} \sigma^4_0  + k_1\omega_1 + k_6\omega_6 &(k_1 \geq 2, k_6 \geq 1),\\
\sigma^4_1  + k_1\omega_1 + k_6\omega_6 &(k_1, k_6 \geq 1),\\
\sigma^4_2  + k_1\omega_1 + k_6\omega_6 &(k_1 \geq 2, k_6 \geq 1),
\\
\\
\\
\end{matrix}$\\
\hline

\end{longtable}
\end{center}

As in the previous section, we are reduced to the cases when the non-unitarity of 
the spherical module $J_{D_p}(\nu,\nu)$ is detected by a $\sigma_i^p$
such that  $\sigma_i^p + \eta$ is not $M-$bottom layer. In most
cases, the infinitesimal character does not satisfy the condition for
the Dirac cohomology to be nonzero by linear algebra. We give details in the various cases.

\subsection{Type $D_7$} By the above table, one only considers the spherical modules $J_{D_7}(\nu_7,\nu_7)$ 
such that the first occurrence of opposite signatures at the $Spin(14)-$types with highest weight $\sigma_2^7$ and $\sigma_3^7$
only. By the results of \cite[Section 6]{BDW}, they are
\begin{equation}
2\nu_7 = \begin{cases}
[2,2,2,1,1,1,1] &\text{and}\ \ \eta = \omega_1,\\
[2,1,1,1,1,1,1] &\text{and}\ \ \eta = \omega_1, 2\omega_1
\end{cases}
\end{equation} 
In all the above cases, the infinitesimal characters $(\la_L,\la_R)$ of the induced modules
\begin{equation} \label{eq-d71}
\mathrm{Ind}_{MN}^{E_8}\left(J_{D_7}(\nu_7,\nu_7) \otimes \mathbb{C}_{\tau(\eta)} \otimes {\bf 1} \right)
\end{equation}
have $2\la_L$-value equal to:
$$
2\la_L = \begin{cases}
[-\frac{25}{2},1,1,1,1,2,2,2] & \text{if} \ \ 2\nu_{\ell} = [2,2,2,1,1,1,1]\ \text{and}\ \eta = \omega_1,\\
[-10,1,1,1,1,1,1,2] \sim [0,0,1,0,1,0,1,0] & \text{if} \ \ 2\nu_{\ell} = [2,1,1,1,1,1,1]\ \text{and}\ \eta = \omega_1,\\
[-9,1,1,1,1,1,2] \sim [0,1,1,0,0,1,0,1]  & \text{if} \ \ 2\nu_{\ell} = [2,1,1,1,1,1,1]\ \text{and}\ \eta = 2\omega_1,
\end{cases}$$
none of which is regular integral.

\begin{remark} \label{rmk-d7}
If $2\nu_7 = [1,1,1,1,1,1,1]$, $[2,2,1,1,1,1,1]$, $[2,2,2,2,1,1,1]$ ($= (6,5,4,3,2,1,0)$, $(8,6,4,3,2,1,0)$
and $(10,8,6,4,2,1,0)$ in usual coordinates of $D_7$), then $J_{D_7}(\nu_7,\nu_7)$
are spherical unipotent representations obtained by theta-lift of the spherical metaplectic representations
of $Sp(6,\mathbb{C})$, $Sp(4,\mathbb{C})$ and $Sp(2,\mathbb{C})$ to $SO(14,\mathbb{C})$ respectively. 
By \cite{DW1}, only the middle parameter is in the Dirac series. 

On the
other hand, the infinitesimal characters $(\la_L,\la_R)$ of \eqref{eq-d71} corresponding to these three spherical
unipotent representations have $2\la_L-$value equal to:
$$[-\frac{21}{2}+k_1,1,1,1,1,1,1,1],\quad [-12+k_1,1,1,1,1,1,2,2],\quad [-\frac{31}{2}+k_1,1,1,1,2,2,2,2]$$
respectively. Coincidentally, only the middle parameter can possibly satisfy \eqref{eq-HP}. This observation also holds
for the other spherical (as well as nonspherical) unipotent representations of Type $D$ and $E$ which we are going to study 
below (see the paragraphs after \eqref{eq-unipd7}, \eqref{eq-indd6}, \eqref{eq-indd4}, \eqref{eq-inde7}
and Remark \ref{rmk-e7}). 
\end{remark}

\subsection{Type $D_6$} We carry out the same analysis as in the previous section. Suppose
$J_{D_6}(\nu_6,\nu_6)$ is such that the signatures of the Hermitian form are {\bf both} negative at the $Spin(12)-$types 
$V_{\mathfrak{so}(12)}(\sigma_6^0)$ and $V_{\mathfrak{so}(12)}(\sigma_6^2)$ (or 
$V_{\mathfrak{so}(12)}(\sigma_6^0)$ and $V_{\mathfrak{so}(12)}(\sigma_6^3)$), then 
for any $\eta = k_1\omega_8 + k_8\omega_8$, at least one of $\eta + \sigma_6^0$ and
$\eta + \sigma_6^{2}$ (or $\sigma_6^{3}$) is $M-$bottom layer, which implies $\pi$ is not unitary.

Consequently, we only consider $J_{D_6}(\nu_6,\nu_6)$ such that the signature is indefinite
at {\bf exactly} one of $V_{\mathfrak{so}(12)}(\sigma_6^0)$, $V_{\mathfrak{so}(12)}(\sigma_6^2)$ or $V_{\mathfrak{so}(12)}(\sigma_6^3)$. By the results in \cite{BDW}, we are
reduced to studying the following cases:
\begin{itemize}
\item $\sigma_6^2:$ $2\nu_6 = [2,2,2,2,1,1]$ and $\eta = \omega_1 + k_8\omega_8$;
\item $\sigma_6^3:$ $2\nu_6 = [2,2,1,1,1,1]$ and $\eta = \omega_1 + k_8\omega_8, 2\omega_1 + k_8\omega_8$;
\item $\sigma_6^0:$ $2\nu_6 =$ 
\begin{align*}
&[1,2,2,2,2,2],\ [1,1,2,2,2,2],\ [2,1,1,2,2,2],\ [2,2,1,1,2,2],\ [2,2,2,2,1,1],\\
&[1,1,1,2,2,2],\ [1,2,1,1,2,2],\ [1,2,2,1,1,1],\ [1,1,2,1,1,1],\ [1,1,1,1,2,2]
\end{align*}
and $\eta = k_1\omega_1 + \omega_8$ (c.f. \cite[Section 6.3(c)]{BDW}).
\end{itemize}
In the first two cases, the infinitesimal character $(\la_L,\la_R)$ of the induced module 
$$\mathrm{Ind}_{MN}^{E_8}\left(J_{D_6}(\nu_6,\nu_6) \otimes \mathbb{C}_{\tau(\eta)} \otimes {\bf 1} \right)$$
has $2\lambda_L-$value equal to:
$$
2\la_L = \begin{cases}
[-\frac{25}{2},1,1,2,2,2,2,k_8-8] & \text{if} \ \ 2\nu_{\ell} = [2,2,2,2,1,1]\ \text{and}\ \eta = \omega_1 + k_8\omega_8,\\
[-8,1,1,1,1,2,2,k_8-7] & \text{if} \ \ 2\nu_{\ell} = [2,2,1,1,1,1]\ \text{and}\ \eta = \omega_1 + k_8\omega_8,\\
[-7,1,1,1,1,2,2,k_8-7]  & \text{if} \ \ 2\nu_{\ell} = [2,2,1,1,1,1]\ \text{and}\ \eta = 2\omega_1 + k_8\omega_8.
\end{cases}$$
One can check that none of the above $2\la_L$ is regular integral. More precisely, the first parameter is not integral.
As for the second parameter, $[-8,1,1,1,1,2,2,k_8-7] = (0,1,2,3,5,7,k_8,k_8+2)$ in usual Bourbaki coordinates is singular for all $k_8 > 0$ since
$$\langle (0,1,2,3,5,7,k_8,k_8+2), \frac{1}{2}(1,1,-1,-1,-1,1,-1,1)\rangle = 0.$$
Similarly, the last parameter $[-7,1,1,1,1,2,2,k_8-7] = (0,1,2,3,5,7,k_8,k_8+4)$ is singular for all $k_8 > 0$, since 
$$\langle (0,1,2,3,5,7,k_8,k_8+4), \frac{1}{2}(1,-1,1,-1,1,-1,-1,1)\rangle = 0.$$

We are left to study the last case when the non-unitarity certificate is $V_{\mathfrak{so}(12)}(\sigma_6^0)$. Here we need to apply Step (iv) in Section \ref{sec-strategy} by extending the Levi subgroup
$M$ to $M'$ of Type $D_7$ and considering the induced module
\begin{equation} \label{eq-indd7}
\mathrm{Ind}_{M'N'}^{E_8}\left(J_{D_7}(\nu_6^+;\nu_6^-) \otimes \mathbb{C}_{\tau'(\eta)} \otimes {\bf 1} \right)
\end{equation}
where $\nu_6^{\pm} = \left(\pm\frac{1}{2}, \nu_6\right)$ in the usual coordinates of $D_7$, and $\tau'(\eta)$
is chosen such that its lowest $K-$type is $V_{\frk}(k_1\omega_1 + \omega_8) = V_{\frk}(\eta)$ (note that $\eta = (0,0,0,0,0,0,1,2k_1+1)$ in usual coordinates).

We begin by considering the following two choices of $2\nu_6$: 
\begin{center}
$2\nu_6 = [1,1,1,1,2,2] = (6,5,4,3,2,0)$\quad (or $[2,2,1,1,2,2] = (8,6,4,3,2,0)),$
\end{center}
corresponding to the module $J_{D_7}(\nu_6^+;\nu_6^-)$ in \eqref{eq-indd7} with
\begin{equation} \label{eq-unipd7}
\begin{aligned}
&J_{D_7}\left(\frac{1}{2}(6,5,4,3,2,1,0); \frac{1}{2}(6,5,4,3,2,-1,0)\right) \\ 
(\text{or}\ &J_{D_7}\left(\frac{1}{2}(8,6,4,3,2,1,0); \frac{1}{2}(8,6,4,3,2,-1,0)\right)).
\end{aligned}
\end{equation}
Both of them are (non-spherical) unipotent representations obtained by the theta-lift of the non-spherical metaplectic representations in $Sp(4,\mathbb{C})$ (or $Sp(6,\mathbb{C})$) to $SO(14,\mathbb{C})$.
However, in the second case, $2\la_L = 2\nu_6 + \eta = [1,1,1,1,2,2] + k_1\omega_1+\omega_8$ is not integral for all $k_1 \geq 1$. Therefore, only the first module of \eqref{eq-unipd7} (which is in $\widehat{M'}^d$ by \cite{DW1}) contributes to the Dirac series.


\medskip
For the other choices of $2\nu_6$, the module $J_{D_7}(\nu_6^+;\nu_6^-)$ in \eqref{eq-indd7}
has indefinite form on 
$V_{\mathfrak{so}(14)}(1,0,0,0,0,0,0)$ and $V_{\mathfrak{so}(14)}(2,1,0,0,0,0,0)$ by \cite[Section 6.4]{BDW}.
These $(M'\cap K)-$type is $M'-$bottom layer for all $k_1 \geq 1$, since the weight $(0,0,0,0,0,1,2,2k_1+1)$
is dominant in $E_8$ for all $k_1 \geq 1$. Consequently, the induced module \eqref{eq-indd7} and its lowest $K-$type
subquotient $\pi$ are both nonunitary.

\subsection{Type $D_5$} As in the previous section, we only study $J_{D_5}(\nu_5,\nu_5)$ such that the signatures of the Hermitian form the signature is indefinite at {\bf exactly} one of $V_{\mathfrak{so}(10)}(\sigma_5^0)$ or $V_{\mathfrak{so}(10)}(\sigma_5^2)$:
\begin{itemize}
\item $\sigma_5^2:$ $2\nu_5 = [2,2,2,1,1]$ and $\eta = \omega_1 + k_7\omega_7 + k_8\omega_8$;
\item $\sigma_5^0: 2\nu_5 =$ 
$$[1,2,2,2,2],\ [1,1,2,2,2],\ [2,1,1,2,2],\ [1,1,1,2,2],\ [1,2,2,1,1,1],$$  
and $\eta = k_1\omega_1 + \omega_7 + k_8\omega_8$.
\end{itemize}
The analysis is the same as in the Type $D_6$ case, where the $2\la_L$ parameter is not regular integral,
or the irreducible subquotient is not unitary by bottom layer arguments. The interesting cases are $2\nu_5 = [2,1,1,2,2] = (6,4,3,2,0)$ and $[1,1,1,2,2] = (5,4,3,2,0)$, where the induced modules
\begin{equation} \label{eq-indd6}
\begin{aligned}
&\mathrm{Ind}_{M'N'}^{E_8}\left(J_{D_6}\left(\frac{1}{2}(6,4,3,2,1,0); \frac{1}{2}(6,4,3,2,-1,0)\right) \otimes \mathbb{C}_{\tau'(\eta)}\otimes {\bf 1} \right);\\ 
&\mathrm{Ind}_{M'N'}^{E_8}\left(J_{D_6}\left(\frac{1}{2}(5,4,3,2,1,0); \frac{1}{2}(5,4,3,2,-1,0)\right) \otimes \mathbb{C}_{\tau'(\eta)} \otimes {\bf 1} \right)
\end{aligned}
\end{equation}
(here $M'$ has a single simple factor of Type $D_6$) are both unitary. Indeed, the $D_6-$factors in the above equation are theta lifts
from the non-spherical metaplectic representation of $Sp(4,\mathbb{C})$ and $Sp(6,\mathbb{C})$ to $SO(12,\mathbb{C})$. However, 
in the first case, $2\la_L = \widetilde{\nu_5}+k_1\omega_1 + \omega_7 + k_8\omega_8$ is not integral for all $k_1, k_8 \geq 1$. Consequently, only the second module in \eqref{eq-indd6} (whose inducing module $J_{D_6}\left(\frac{1}{2}(5,4,3,2,1,0); \frac{1}{2}(5,4,3,2,-1,0)\right)$
is in $\widehat{M'}^d$ by \cite{DW1}) contributes to $\widehat{G}^d$.


\subsection{Type $D_5'$} As before, one only considers the spherical module $J_{D_5'}(\nu_5,\nu_5)$ 
such that the first occurrence of opposite signature is at the $Spin(10)-$type with highest weight $\sigma_0^5$. 
By the results of \cite{BDW}, there is only one possibility:
\begin{equation}
2\nu_5 = [2,1,1,1,1] \quad \text{and}\quad \eta = \omega_6+k_7\omega_7+k_8\omega_8.
\end{equation} 
In this case, the infinitesimal character $(\la_L,\la_R)$ of the induced module
$$\mathrm{Ind}_{MN}^{E_8}\left(J_{D_5'}(\nu_5,\nu_5) \otimes \mathbb{C}_{\tau(\eta)} \otimes {\bf 1}\right)$$
has $2\la_L$ equal to a non-integral weight $2\la_L = [2,1,1,1,1,-\frac{9}{2},k_7,k_8]$ 
for all $k_7, k_8 \geq 0$. 


\subsection{Type $D_4$} We study $J_{D_4}(\nu_4,\nu_4)$ such that the signature of the Hermitian form is indefinite at {\bf exactly} one of $V_{\mathfrak{so}(8)}(\sigma_4^0)$ or $V_{\mathfrak{so}(8)}(\sigma_4^2)$:
\begin{itemize}
\item $\sigma_4^2:$ $2\nu_4 = [2,2,1,1]$ and $\eta = \omega_1 + k_6\omega_6 + k_7\omega_7 + k_8\omega_8$;
\item $\sigma_4^0:$ $2\nu_4 = [1,2,2,2], [1,1,2,2]$ and $\eta = k_1\omega_1 + \omega_6 + k_7\omega_7 + k_8\omega_8$.
\end{itemize}
As in the analysis in Type $D_5$ and $D_6$ above, the only possibility of $\pi$ being unitary is when $2\nu_4 = [1,1,2,2] = (4,3,2,0)$, where the induced module
\begin{equation} \label{eq-indd4}
\mathrm{Ind}_{M'N'}^{E_8}\left(J_{D_5}\left(\frac{1}{2}(4,3,2,1,0); \frac{1}{2}(4,3,2,-1,0)\right) \otimes \mathbb{C}_{\tau'(\eta)}\otimes {\bf 1} \right)
\end{equation}
(here $M'$ has a single simple factor of Type $D_5$) is unitary with no Dirac cohomology: the $D_5-$factor in the above equation is the theta lift
from the non-spherical metaplectic representation of $Sp(4,\mathbb{C})$ to $SO(10,\mathbb{C})$. However, $2\la_L = \widetilde{\nu_4} + k_1\omega_1 + \omega_6 + k_7\omega_7 + k_8\omega_8$ is not integral, so it does not contribute to $\widehat{G}^d$.


\bigskip
This finishes the proof of Proposition \ref{cor-d}.
\end{proof}

\section{Type $E$ Levi subgroups}
We now study the case when there is a Type $E_6$ or $E_7$ factor in the Levi subgroup $M$ of $G$, or $M = G$ is of Type $E_8$. Throughout this section, we fix the simple roots of $E_i$ by $\beta_i = \alpha_i$ ($1 \leq i \leq 8$),
where $\alpha_i$ are the simple roots of $E_8$ given in \eqref{eq-e8}.

\medskip
Analogous to the previous sections, we will prove the following:

\begin{proposition} \label{cor-e}
Let $\pi = J(\lambda,-s\lambda) \in \widehat{G}$ be such that $2\lambda$ is regular integral (e.g. $\pi \in \widehat{G}^d$), and 
the highest weight of its lowest $K-$type $\eta = \{\lambda + s\lambda\}$ defines a Levi subgroup 
$M$ (c.f. \eqref{eq-m}) consisting only of a Type $E$ simple factor and possibility a Type $A$ simple factor. Then $\pi$ must be a subquotient of the unitarily induced module
$$\mathrm{Ind}_{M'N'}^{E_8}\left(\pi_E^{unip,d} \otimes \mathrm{triv}_A \otimes \mathbb{C}_{\tau'(\eta)} \otimes {\bf 1}\right),$$
where $M'$ is a Levi subgroup consisting of a Type $E$ simple factor and possibility a Type $A$ simple factor, $\pi_E^{unip,d}$
is a unipotent representation of Type $E$ with nonzero Dirac cohomology, 
and $\mathrm{triv}_A$ is the trivial representation
of Type $A$.
\end{proposition}

\begin{proof}
For these groups, we establish the non-unitarity certificates for the
spherical modules on the $K-$types appearing in $\texttt{adj}_i \otimes
\texttt{adj}_i$ where $\texttt{adj}_i$ is the adjoint representation in $E_i$ ($i = 6,7,8$) via intertwining operators using \texttt{mathematica}. 
This will rule out all but three parameters in the case of $E_8$, and the remaining three parameters are widely conjectured to be unitary (see Section \ref{subsec-e8} below).

As for $E_6$ and $E_7$, the highest weights of the $K-$types appearing in $\texttt{adj}_i \otimes \texttt{adj}_i$ are:
\begin{itemize}
\item Type $E_6$:
\begin{align*}
\omega_2 &= \beta_1 + 2\beta_2 + 2\beta_3 + 3\beta_4 + 2\beta_5 + \beta_6, \\
\omega_1+\omega_6 &= 2\beta_1 + 2\beta_2 + 3\beta_3 + 4\beta_4 + 3\beta_5 + 2\beta_6,\\
2\omega_2 &= 2\beta_1 + 4\beta_2 + 4\beta_3 + 6\beta_4 + 4\beta_5 + 2\beta_6, \\
\omega_4 &= \beta_1 + 2\beta_2 + 3\beta_3 + 4\beta_4 + 3\beta_5 + 2\beta_6 
\end{align*}
\item Type $E_7$:
\begin{align*}
\omega_1 &= 2\beta_1 + 2\beta_2 + 3\beta_3 + 4\beta_4 + 3\beta_5 + 2\beta_6 + \beta_7, \\
\omega_3 &= 2\beta_1 + 3\beta_2 + 4\beta_3 + 6\beta_4 + 5\beta_5 + 4\beta_6 + 2\beta_7,\\
\omega_6 &= 3\beta_1 + 4\beta_2 + 6\beta_3 + 8\beta_4 + 6\beta_5 + 4\beta_6 + 2\beta_7,\\
2\omega_1 &= 4\beta_1 + 4\beta_2 + 6\beta_3 + 8\beta_4 + 6\beta_5 + 4\beta_6 + 2\beta_7, 
\end{align*}
\end{itemize}

By the same argument as in the previous sections, consider the lowest $K-$type subquotient $\pi$ of
$$\mathrm{Ind}_{MN}^{E_8}\left(J_{E_i}(\nu_i,\nu_i) \otimes \mathbb{C}_{\tau(\eta)} \otimes {\bf 1} \right), \quad \eta = \begin{cases} k\omega_7 + l\omega_8 & \text{if}\ i = 6\\
k\omega_8 & \text{if}\ i = 7\end{cases}.$$
If the spherical module $J_{E_i}(\nu_i,\nu_i)$ has indefinite form on 
$\texttt{adj}_i \otimes \texttt{adj}_i$, then bottom layer arguments 
imply that $\pi$ is also not unitary for $k\geq 2$. In the case $k = 1$, a majority of $\nu_i$'s and $\eta$'s are ruled out since $2\lambda_L-$parameter does not satisfy the necessary conditions \eqref{eq-HP} for Dirac cohomology to be nonzero. We will study the few remaining representations on a case-by-case basis.

Finally, we are left with the representations with positive Hermitian form on the level of $\texttt{adj}_i \otimes \texttt{adj}_i$. We will either identify them as unipotent representations, or we apply \texttt{atlas} (in the Appendix) to conclude that they are not in the Dirac series.

\subsection{Type $E_8$} \label{subsec-e8} We use \texttt{mathematica} to compute
the signatures of the $2^8 = 256$ choices of Hermitian, spherical modules $J_{E_8}(\nu_8,\nu_8)$
 on the $K-$types appearing in $\texttt{adj}_8 \otimes \texttt{adj}_8$. It turns out 
that only $3$ of them have definite Hermitian form:
$$2\nu_8 = [1,1,1,1,1,1,1,1], \quad 2\nu_8 = [1,1,1,1,1,1,2,2], \quad 2\nu_8 = [2,2,2,2,2,2,2,2].$$
Indeed, these $\nu_8$-parameters correspond to spherical unipotent representations attached to the nilpotent orbits $4A_1$ and $3A_1$
(using the Bala-Carter notation) along with the trivial representation
respectively. Although we cannot use \texttt{atlas} to verify their
unitarity, this is  widely believed to be the case.

\smallskip
Assuming the unitarity of these modules, we now check all these modules are in $\widehat{G}^d$.
The first module $J_{E_8}(\frac{1}{2}[1,1,1,1,1,1,1,1],\frac{1}{2}[1,1,1,1,1,1,1,1])$ is the model representation, 
whose $K-$spectrum is given in \cite{AHV} by
$$J_{E_8}(\frac{1}{2}[1,1,1,1,1,1,1,1],\frac{1}{2}[1,1,1,1,1,1,1,1])|_K = \bigoplus_{a,b,c,d,e,f,g,h \geq 0} V_{\mathfrak{k}}([a,b,c,d,e,f,g,h]).$$
By \eqref{eq-HP}, $V_{\mathfrak{k}}(\tau)$ contributes to Dirac cohomology if and only if $\tau = 2\la_L - \rho = 0$.
Note that $V_{\mathfrak{k}}([a,b,c,d,e,f,g,h]) \otimes S_G = V_{\mathfrak{k}}([a,b,c,d,e,f,g,h]) \otimes \left( 2^{\lfloor \frac{\mathrm{rank}(\mathfrak{g})}{2}\rfloor} V_{\mathfrak{k}}(\rho)\right)$ contains $V_{\mathfrak{k}}(0)$ if and only if 
$$[a,b,c,d,e,f,g,h] = [1,1,1,1,1,1,1,1] = \rho.$$ 
So $V_{\mathfrak{k}}([1,1,1,1,1,1,1,1])$ is the only possible $K-$type contributing to its Dirac cohomology.

\medskip
As for $J_{E_8}(\frac{1}{2}[1,1,1,1,1,1,2,2],\frac{1}{2}[1,1,1,1,1,1,2,2])$, the work of \cite{MG} implies that
$$J_{E_8}(\frac{1}{2}[1,1,1,1,1,1,1,1],\frac{1}{2}[1,1,1,1,1,1,1,1])|_K = \bigoplus_{a,b,c,d \geq 0} V_{\mathfrak{k}}([a,0,0,0,0,b,c,d]).$$
As above, the only possible $\widetilde{K}-$type contributing to Dirac cohomology is $V_{\mathfrak{k}}(2\la_L - \rho) = V_{\mathfrak{k}}([0,0,0,0,0,0,1,1])$. One can check (through \texttt{mathematica} for instance) that
the only possibility for $V_{\mathfrak{k}}([a,0,0,0,0,b,c,d]) \otimes S_G$ containing $V_{\mathfrak{k}}([0,0,0,0,0,0,1,1])$ is 
when $$[a,0,0,0,0,b,c,d] = [4,0,0,0,0,4,1,1].$$ 
Hence this representation is also in $\widehat{G}^d$, and $V_{\mathfrak{k}}([4,0,0,0,0,4,1,1])$ is the only $K-$type contributing to
its Dirac cohomology.

\subsection{Type $E_7$} \label{subsec-e7} In this case, there are $2^7 = 128$ choices of Hermitian, spherical modules $J_{E_7}(\nu_7,\nu_7)$. 
One can check by \texttt{mathematica} that $119$ of them have indefinite Hermitian form on the $K-$types in $\texttt{adj}_7 \otimes \texttt{adj}_7$. 
In all such cases $2\la_L = 2\widetilde{\nu_7}+\omega_8$ is either non-integral, or it is singular. So none of them contributes to the Dirac series. 

As for the remaining $9$ parameters having definite Hermitian form on the $K-$types in  $\texttt{adj}_7 \otimes \texttt{adj}_7$, only $3$ of them give regular integral $2\la_L = 2\widetilde{\nu_7} + k\omega_8$ for $k \geq 1$:
$$2\nu_7 = [1,1,1,1,1,1,2],\quad [2,1,1,1,1,1,2], \quad [2,2,2,2,2,2,2].$$
(some interesting singular integral $2\nu_7$ are given in Remark \ref{rmk-e7} below). Note that the first and third parameter above give the spherical unipotent representation corresponding to the orbit $3A_1'$
(the $21^{st}$-entry in \cite[Table 4]{DW3}) and the trivial representation respectively. So the 
lowest $K-$type subquotient $\pi$ of $\mathrm{Ind}_{MN}^{E_8}(J_{E_7}(\nu_7;\nu_7) \otimes \mathbb{C}_{\tau(\eta)} \otimes \mathbf{1})$
contributes to $\widehat{G}^d$ whenever $2\la_L$ is regular. 

As for the middle parameter $2\nu_7 = [2,1,1,1,1,1,2]$, the same
calculations of signatures for the $K-$types in
($\omega_7\otimes \omega_7$) can be applied using \texttt{mathematica} for the linear algebra. The conclusion is that
$$J_{E_7}\left(\frac{1}{2}[2,1,1,1,1,1,2],\frac{1}{2}[2,1,1,1,1,1,2]\right)$$ 
has indefinite form on $V_{\mathfrak{e}_7 \cap
  \mathfrak{k}}(2\omega_7)$, a $K-$type that does not appear in $\texttt{adj}_7 \otimes
\texttt{adj}_7$. So the lowest $K-$type subquotient of
$\mathrm{Ind}_{MN}^{E_8}(J_{E_7}(\frac{1}{2}[2,1,1,1,1,1,2],$
$\frac{1}{2}[2,1,1,1,1,1,2]) \otimes \mathbb{C}_{\tau(\eta)} \otimes
\mathbf{1})$ cannot be in $\widehat{G}^d$ for any $k \geq 1$, since 
\begin{itemize}
\item[(a)] for $k=1$, $2\la_L = [2,1,1,1,1,1,2,-7]$ is singular, so it does not satisfy \eqref{eq-HP};
\item[(b)] for $k \geq 2$, $V_{\mathfrak{e}_7 \cap \mathfrak{k}}(2\omega_7)$ is $M-$bottom layer in the above induced module, so its subquotient is not unitary.
\end{itemize}

\begin{remark} \label{rmk-e7}
When $2\nu_7 = [1,1,1,1,1,1,1]$ or $[2,1,2,1,1,1,1]$, our calculations above imply that $2\la_L = 2\widetilde{\nu_7} + k\omega_8$ is
not regular integral for any $k \geq 1$. Note that the spherical modules $J_{E_7}(\nu_7,\nu_7)$ 
corresponding to these parameters are unipotent representations
attached to the nilpotent orbits $4A_1$ and $3A_1''$ respectively (see \cite[Section 6]{DW3} for details). It is shown in \cite{DW3} that these unipotent representations have no Dirac cohomology in $E_7$.

\end{remark}

\subsection{Type $E_6$} \label{subsec-e6} There are $2^4 = 16$ choices of Hermitian, spherical modules $J_{E_6}(\nu_6,\nu_6)$, namely
$$2\nu_6 = [a,b,c,d,c,a],\quad a,b,c,d \in \{1,2\}.$$ 
Among them, $J_{E_6}(\nu_6,\nu_6)$ has definite Hermitian form on the level of $\texttt{adj}_6 \otimes \texttt{adj}_6$ only when
$2\nu_6 = [1,1,1,1,1,1]$ or $[2,2,2,2,2,2]$. They are the unipotent representation corresponding to the model orbit $3A_1$ and the trivial representation respectively.

\smallskip
For the remaining $16-2 = 14$ choices of $\nu_6$, only the following can possibly make $2\la_L = 2\widetilde{\nu_6} + \omega_7 + l\omega_8$ regular integral:
\begin{equation} \label{eq-e6u}
\begin{aligned}
 2\nu_6 &= [2,1,2,1,2,2], \quad 2\la_L = [1,1,1,1,1,1,1,l-12];\\
 2\nu_6 &= [2,1,2,2,2,2], \quad 2\la_L = [1,1,2,1,1,1,1,l-14];\\
 2\nu_6 &= [2,2,2,1,2,2], \quad 2\la_L = [2,1,1,1,1,1,1,l-13]
\end{aligned}
\end{equation}

As in the Levi Type $D_6$ case, we extend the Levi subgroup to $M'$ of Type $E_7$ and consider the induced module
\begin{equation} \label{eq-inde7}
\mathrm{Ind}_{M'N'}^{E_8}\left(J_{E_7}(\nu_6^+;\nu_6^-) \otimes \mathbb{C}_{\tau'(\eta)} \otimes {\bf 1} \right)
\end{equation}
with $\nu_6^{\pm} = \la_L \pm \frac{1}{2}\omega_7$, whose lowest $K-$type is
equal to $V_{\frk}(\omega_7 + l\omega_8) = V_{\frk}(\eta)$. Note that $\eta = (0,0,0,0,0,1,l+1,l+2)$ in usual coordinates.

For our choices of $l$ above, the $(M' \cap K)-$type $V_{\mathfrak{k}}(\omega_1 + \omega_7+l\omega_8)$ in $J_{E_7}(\nu_6^+;\nu_6^-)$
is $M'-$bottom layer. More precisely, the $K-$type of highest weight $(0,0,0,0,0,1,l,l+3)$ is always dominant for $l \geq 1$. So it occurs with the same multiplicity and signature as the $(M' \cap K)-$type $V_{\mathfrak{m} \cap \mathfrak{k}}(\omega_1 + \omega_7)$ in $J_{E_7}(\nu_6^+;\nu_6^-)$.

\medskip
We now study \eqref{eq-inde7} for the three parameters of $\nu_6$ in \eqref{eq-e6u}.
For the first parameter, the module $J_{E_7}(\nu_6^+;\nu_6^-)$ in \eqref{eq-inde7} with $2\nu_6^+ = [1,1,1,1,1,1,1]$ is equal to the $19^{th}$ entry of \cite[Table 4]{DW3}, which is
a (nonspherical) unipotent representation attached to the nilpotent orbit $4A_1$ with nonzero Dirac cohomology. Consequently,
the first parameter yields a representation in the Dirac series of $E_8$ whenever $2\la_L$ satisfies \eqref{eq-HP}. 

As for the last two parameters, one can compute that $2\nu_6^+ = [2,1,1,1,1,1,1]$ and $[1,1,2,1,1,1,1]$ respectively. By \texttt{atlas}, the modules $J_{E_7}(\nu_6^+;\nu_6^-)$ for these two choices of $2\nu_6^+$ have indefinite signatures on the $(M' \cap K)-$types
$V_{\mathfrak{m}' \cap \mathfrak{k}}(\omega_7)$ and $V_{\mathfrak{m}' \cap \mathfrak{k}}(\omega_1 + \omega_7)$ (see Appendix). So the discussion in the previous paragraph implies that the irreducible lowest $K-$type
subquotients of \eqref{eq-inde7} corresponding to these two parameters are not unitary.
\bigskip

This finishes the proof of Proposition \ref{cor-e}.
\end{proof}


\medskip

\centerline{\scshape Funding} 
The authors would like to thank the referee for carefully reading the manuscript, along with many helpful suggestions.

Barbasch is supported by NSF grant 2000254.
Wong is supported by the National Natural Science Foundation of China (no. 12341101, 12371033) and the 
Shenzhen Science and Technology Innovation Committee grant
(no. 20220818094918001).

\section*{Appendix - Some atlas calculations}



In this section, we study the two non-spherical modules $J_{E_7}(\nu_6^+;\nu_6^-)$ appearing in \eqref{eq-inde7} with lowest $K$-type $V_{\mathfrak{e}_7 \cap \mathfrak{k}}(\omega_7)$. The authors would like to thank Chao-Ping Dong for carrying out these calculations.

\medskip
For $2\nu_6^+ = [2,1,1,1,1,1,1]$, one has:
\small
\begin{verbatim}
atlas> set q1=parameter(x,[0,1,0,-1,1,-1,1,0,0,0,0,0,0,0],[2,0,1,2,0,2,0,0,0,0,0,0,0,0])
atlas> set p1=first_param(finalize(q1))
atlas> infinitesimal_character(p1)
Value: [2,1,1,1,1,1,1,2,1,1,1,1,1,1]/2
atlas> print_sig_irr_long(p1,x,height(p1)+60)
sig  x  lambda                         hw                            dim    height
1    0  [0,0,0,0,0,0,0,0,0,0,0,0,0,1]  [1,1,1,1,1,1,1,1,1,1,1,1,1,2]  56     27
1    0  [0,0,0,0,0,0,0,0,1,0,0,0,0,0]  [1,1,1,1,1,1,1,1,2,1,1,1,1,1]  912    49
2+s  0  [0,0,0,0,0,0,0,1,0,0,0,0,0,1]  [1,1,1,1,1,1,1,2,1,1,1,1,1,2]  6480   61
2    0  [0,0,0,0,0,0,0,0,0,0,0,1,0,0]  [1,1,1,1,1,1,1,1,1,1,1,2,1,1]  27664  75
2+s  0  [0,0,0,0,0,0,0,0,0,0,0,0,1,1]  [1,1,1,1,1,1,1,1,1,1,1,1,2,2]  51072  79
1+s  0  [0,0,0,0,0,0,1,0,0,0,0,0,0,2]  [1,1,1,1,1,1,0,1,1,1,1,1,1,3]  24320  81
2+s  0  [0,0,0,0,0,0,0,1,1,0,0,0,0,0]  [1,1,1,1,1,1,1,2,2,1,1,1,1,1]  86184  83
\end{verbatim}

\medskip
\normalsize The case of $2\nu_6^+ = [2,1,1,1,1,1,1]$ is as follows:
\small
\begin{verbatim}
atlas> set q2=parameter(x,[0,1,0,-1,1,-1,1,0,0,0,0,0,0,0],[1,0,2,2,0,2,0,0,0,0,0,0,0,0])
atlas> set p2=first_param(finalize(q2))
atlas> infinitesimal_character(p2)
Value: [1,1,2,1,1,1,1,1,1,2,1,1,1,1]/2
atlas> print_sig_irr_long(p2,x,height(p2)+60)
sig   x  lambda                         hw                             dim    height
1     0  [0,0,0,0,0,0,0,0,0,0,0,0,0,1]  [1,1,1,1,1,1,1,1,1,1,1,1,1,2]  56     27
1     0  [0,0,0,0,0,0,0,0,1,0,0,0,0,0]  [1,1,1,1,1,1,1,1,2,1,1,1,1,1]  912    49
2+s   0  [0,0,0,0,0,0,0,1,0,0,0,0,0,1]  [1,1,1,1,1,1,1,2,1,1,1,1,1,2]  6480   61
2+s   0  [0,0,0,0,0,0,0,0,0,0,0,1,0,0]  [1,1,1,1,1,1,1,1,1,1,1,2,1,1]  27664  75
2+2s  0  [0,0,0,0,0,0,0,0,0,0,0,0,1,1]  [1,1,1,1,1,1,1,1,1,1,1,1,2,2]  51072  79
1+s   0  [0,0,0,0,0,0,1,0,0,0,0,0,0,2]  [1,1,1,1,1,1,0,1,1,1,1,1,1,3]  24320  81
3+2s  0  [0,0,0,0,0,0,0,1,1,0,0,0,0,0]  [1,1,1,1,1,1,1,2,2,1,1,1,1,1]  86184  83
\end{verbatim}

\normalsize By looking at the third line of the above calculations, there exists a copy of $M' \cap K$-type $V_{\mathfrak{m}' \cap \mathfrak{k}}(\omega_1 + \omega_7)$ having different signature from the lowest $M' \cap K$-type $V_{\mathfrak{m}' \cap \mathfrak{k}}(\omega_7)$ for both modules.

\end{document}